\def\mapright#1{\smash{
   \mathop{\longrightarrow}\limits^{#1}}}
\def\mapdown#1{\Big\downarrow
   \rlap{$\vcenter{\hbox{$\scriptstyle#1$}}$}}
\newtheorem{theorem}{Theorem}[section]
\newtheorem{lemma}[theorem]{Lemma}
\newtheorem{proposition}[theorem]{Proposition}
\newtheorem{corollary}[theorem]{Corollary}
\newtheorem*{definition}{Definition}
\newtheorem{example}[theorem]{Example}
\newtheorem{question}[theorem]{Question}
\newtheorem{remark}[theorem]{Remark}
\def\P{\mathbb P}
\def\O{{\mathcal O}}
\def\J{{\mathcal J}}
\def\A{{\mathcal A}}
\def\B{{\mathcal B}}
\def\Q{{\mathcal Q}}
\def\U{{\mathcal U}}
\def\Hom{{\rm Hom}}
\begin{document}

\title{Schwarzenberger bundles of arbitrary rank on the projective
space}
\author{Enrique Arrondo}
\date{}
\maketitle

\begin{abstract}
We introduce a generalized notion of Schwarzenberger bundle on the
projective space. Associated to this more general definition, we give
an ad-hoc notion of jumping subspaces of a Steiner bundle on
$\P^n$ (which in rank $n$ coincides with
the notion of unstable hyperplane introduced by Vall\`es, Ancona and
Ottaviani). For the set of jumping hyperplanes, we find a sharp
bound for its dimension. We also classify those Steiner bundles
whose set of jumping hyperplanes have maximal dimension and
prove that they are generalized Schwarzenberger bundles.

\end{abstract}


\leftline{2000 Mathematics subject classification: 14F05, 14N05}

\section*{Introduction}

In \cite{Sch}, Schwarzenberger constructed some particular
vector bundles $F$ of rank $n$ in the projective space
$\P^n$, related to the secant spaces to rational normal curves and
having a resolution of the form
$$0\to\O_{\P^n}(-1)^{\oplus s}\to\O_{\P^n}^{\oplus t}\to
F\to0.$$
Arbitrary vector bundles on $\P^n$ admitting such a
resolution and having arbitrary rank (necessarily at least $n$)
has been widely studied since then. These general bundles were
called Steiner bundles by Dolgachev and Kapranov in \cite{DK}, because
of their relation with the classical Steiner construction of
rational normal curves. In that paper, the authors relate some
Steiner bundles of rank $n$ (the so called logarithmic bundles) to
configurations of hyperplanes in
$\P^n$. In fact, to a general configuration of $k$ hyperplanes
they assign a Steiner bundle and, if this is not a
Schwarzenberger bundle, there is a Torelli-type result in the
sense that the configuration of hyperplanes can be
reconstructed from the bundle (this is proved in \cite{DK} only for
$k\ge 2n+3$, and in general by Vall\`es in \cite{V1}).

The result of Vall\`es and other related results by him and
Ancona and Ottaviani (see \cite{AO}) are based on considering special
hyperplanes associated to Steiner bundles of rank $n$, the
so-called unstable hyperplanes. In particular, they
prove that a Steiner bundle of rank $n$ is one of those
constructed by Dolgachev and Kapranov if and only if it
possesses at least $t+1$ unstable hyperplanes (\cite{AO} Corollary
5.4) and if it has at least $t+2$ unstable hyperplanes then it
is a Schwarzenberger bundle and the set of unstable hyperplanes
forms a rational normal curve (\cite{V1} Th\'eor\`eme 3.1). Hence,
except in the last case, one recovers the original configuration
of hyperplanes from its corresponding Steiner bundle. On the
other hand, it is also true that, starting from a rational normal
curve instead of a finite number of hyperplanes and
constructing its corresponding Schwarzenberger bundle, one can
still reconstruct the rational normal curve from the set of
unstable hyperplanes.

The starting point of this paper is the last of the above
results, i.e. the correspondence between Schwarzenberger bundles
and rational normal curves. First we introduce a generalized
notion of Schwarzenberger bundle, which will be a Steiner bundle
(of rank arbitrarily large) obtained from a triplet $(X,L,M)$,
where $X$ is any projective variety and $L,M$ are globally
generated vector bundles on $X$ of respective ranks $a,b$. In this
context, the original vector bundles constructed by Schwarzenberger
are those obtained from triplets in which $X=\P^1$ and $L,M$ are line
bundles on $\P^1$. Independently, Vall\`es in \cite{V2} has recently
given a similar definition in the case $a=b=1$, assuming that $X$ is a
curve, $M$ is very ample and $H^1(L\otimes M^{-1})=0$, but he allows
$L$ to be just a coherent sheaf (so that $F$ is just a coherent
sheaf, not necessarily locally free). He also generalizes the notion
of logarithmic bundles to arbitrary rank and extends the Torelli-type
results for configurations of lines in
$\P^2$. 

The first main problem we want to study is the following:

\begin{question}\label{question}
When is a Steiner bundle a generalized Schwarzenberger bundle? 
\end{question}

In order to answer this question, one needs to see whether it is
possible to associate a triplet $(X,L,M)$ to a given Steiner
bundle. Following the main ideas in \cite{DK}, \cite{AO} and \cite{V1}, we
observe that, for Schwarzenberger bundles, any point of $X$
yields a special subspace of $\P^n$, which we call
$(a,b)$-jumping subspace (in fact we will introduce the more natural
notion of jumping pair). This notion generalizes the notion of
unstable hyperplane in \cite{AO} and \cite{V1}, so that we naturally
wonder about the following Torelli-type problem:

\begin{question}\label{otrapregunta}
For which triplets $(X,L,M)$ does it happen that all the jumping
subspaces come from points of $X$?
\end{question}

In this paper, we give a positive answer to Questions
\ref{question} and \ref{otrapregunta} when $a=b=1$ and the set of
jumping subspaces (which in this case are hyperplanes), or more
generally the set of jumping pairs, has maximal dimension. More
precisely, when $a=b=1$ we first provide a sharp bound for the
dimension of the set of jumping pairs of Steiner bundles. Then we
classify all Steiner bundles for which the set of jumping pairs has
maximal dimension, showing that in all cases they are generalized
Schwarzenberger bundles and that the variety $X$ in the triplet is
obtained from the set of jumping pairs.

I want to stress the fact that, despite of the apparently abstract
notions developed in the paper, most of the inspiration and
techniques come from classical projective geometry (varieties of
minimal degree, Segre varieties, linear projections,...).  

The paper is structured as follows. In a first section, we
recall the main properties of Steiner bundles and introduce our
generalized notion of Schwarzenberger bundle. We present four
examples of Schwarzenberger bundles and prove (Proposition
\ref{unicoschwarzenberger}) that, in rank $n$, our definition coincides
with the original Schwarzenberger bundles.

In a second section, we introduce the notion of $(a,b)$-jumping
subspaces and pairs of a Steiner bundle. In the particular case
$a=b=1$, we show (Theorem \ref{teoremon}) that the set of jumping
pairs has dimension at most $t-n-s+1$ and that, if $n=1$ or $s=2$,
any Steiner bundle is a Schwarzenberger bundle (thus generalizing to
our general context the known result for rank $n$).

Finally, in the third section we classify Steiner vector bundles
whose set of jumping pairs has maximal dimension (Theorem
\ref{megateoremon}), showing that, in this case, they are Schwarzenberger
bundles, precisely the examples introduced in the first section. We
include, as a first application of our theory, an improvement
(Corollary \ref{re}) for line bundles of a result of Re (see \cite{R})
about the multiplication map of sections. We finish with some
remarks about the difficulty of the case of arbitrary
$a,b$, and with some possible generalization of our definition to
arbitrary varieties.

This paper has been written in the framework of the research
projects MTM2006-04785 (funded by the Spanish Ministry of
Education) and CCG07-UCM/ESP-3026 (funded by the University
Complutense and the regional government of Madrid). I also
want to thank Sof\'{\i}a Cobo, whose remarks after a careful reading
of a preliminary version helped a lot to improve the
presentation of the paper and suggested the current improvement of
Theorem \ref{megateoremon} (originally stated for the dimension of
$J(F)$).


\section{Generalized Schwarzenberger bundles}

\noindent{\bf General notation.} We will always work over a fixed
algebraically closed ground field $k$. We will use the notation
that, for a vector space $V$ over $k$, the projective space
$\P(V)$ will be the set of hyperplanes of $V$ or equivalently the
set of lines in the dual vector space $V^*$. If $v$ is a nonzero
vector of $V^*$, we will write $[v]$ for the point of $\P(V)$
represented by the line $<v>$ spanned by $v$. On the other
hand, we will denote by $G(r,V)$ the Grassmann variety of
$r$-dimensional subspaces of a vector space $V$.

\medskip

Recall first the definition of Steiner bundle, in which we
will include for convenience the invariants of the resolution.

\begin{definition} {\rm
We will call {\it $(s,t)$-Steiner bundle} over $\P^n$ to a vector
bundle $F$ with a resolution 
$$0\to S\otimes\O_{\P^n}(-1)\to T\otimes\O_{\P^n}\to
F\to0$$
where $S,T$ are vector spaces over $k$ of respective
dimensions $s$ and $t$ (observe that the rank of $F$ is thus
$t-s$). 
}\end{definition}

\begin{remark}\label{geometria}{\rm
We recall from \cite{DK} the
geometric interpretation of the resolution of a Steiner
bundle. A morphism $\O_{\P^n}(-1)\to T\otimes\O_{\P^n}$ is
equivalent to fixing an $(n+1)$-codimensional linear subspace
$\Lambda\subset\P(T)$ and identifying $\P^n$ with the set,
which we denote by $\P(T)^*_\Lambda$, of hyperplanes of $\P(T)$
containing $\Lambda$. Therefore giving a morphism
$S\otimes\O_{\P^n}(-1)\to T\otimes\O_{\P^n}$ is equivalent to
fixing $s$ linear subspaces
$\Lambda_1,\dots,\Lambda_s\subset\P^{t-1}$ of codimension
$n+1$ with a common parametrization by $\P^n$ of the sets
$\P(T)^*_{\Lambda_i}$  of hyperplanes in $\P^{t-1}$ containing
these $\Lambda_i$. Hence the projectivization of the
fiber of $F$ at any point $p\in\P^n$ is the linear space 
$\P(F_p)\subset\P(T)$ consisting of the intersection of the $s$
hyperplanes of $\P(T)^*_{\Lambda_1},\dots,\P(T)^*_{\Lambda_s}$
corresponding to $p$.
}\end{remark}

We recall in the next lemmas the standard characterization of Steiner
bundles by means of linear algebra, and introduce the notation that we
will use throughout the paper.

\begin{lemma}\label{data}
Given vector spaces $S,T$ over $k$,
the following data are equivalent: 
\begin{enumerate}
\item[(i)] A Steiner bundle $F$ with resolution
$0\to S\otimes\O_{\P^n}(-1)\to T\otimes\O_{\P^n}\to F\to0$.
\item[(ii)] A linear map $\varphi:T^*\to S^*\otimes
H^0(\O_{\P^n}(1))={\rm Hom}(H^0(\O_{\P^n}(1))^*,S^*)$ such
that, for any $u\in H^0(\O_{\P^n}(1))^*$ and any $v\in S^*$,
there exists
$f\in{\rm Hom}(H^0(\O_{\P^n}(1))^*,S^*)$ in the image of
$\varphi$ satisfying $f(u)=v$.
\end{enumerate}
\end{lemma}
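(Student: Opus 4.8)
The plan is to obtain both implications at once by rewriting everything in terms of linear algebra via the standard Hom--tensor identities. A morphism $\psi\colon S\otimes\O_{\P^n}(-1)\to T\otimes\O_{\P^n}$ is the same thing as an element of
\[
\Hom(S\otimes\O_{\P^n}(-1),\,T\otimes\O_{\P^n})=S^*\otimes T\otimes H^0(\O_{\P^n}(1))=\Hom\bigl(T^*,\,S^*\otimes H^0(\O_{\P^n}(1))\bigr),
\]
and the last term is precisely the datum of a linear map $\varphi$ as in (ii). So $\psi$ and $\varphi$ are interchangeable data, and the whole content of the lemma is the assertion that $\psi$ is the injective map of a resolution as in (i) with $F$ locally free --- equivalently, that ${\rm coker}\,\psi$ is a vector bundle --- if and only if $\varphi$ satisfies the displayed surjectivity condition.

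First I would invoke the standard fact that a morphism of vector bundles on a variety has locally free cokernel if and only if it is injective on the fibre over every point; in that situation it is automatically injective as a map of sheaves and exhibits its target as an extension of the (locally free) cokernel by its image, so one does get a resolution exactly as in (i). Hence (i) holds if and only if, for every $p\in\P^n$, the fibre map $\psi_p\colon S\to T$ obtained by trivializing $\O_{\P^n}(-1)$ near $p$ is injective; since a change of trivialization only rescales $\psi_p$, this condition is independent of all choices.

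Next I would identify $\psi_p$ in terms of $\varphi$. Writing $\P^n=\P(H^0(\O_{\P^n}(1)))$, a point $p$ is $[u]$ for a nonzero $u\in H^0(\O_{\P^n}(1))^*$, and evaluation of sections at $p$ is, up to the trivialization scalar, the functional $u$. Tracing this through the identifications above shows that the element $\psi_p\in S^*\otimes T$, viewed as a map $T^*\to S^*$, is the composite
\[
T^*\ \xrightarrow{\ \varphi\ }\ S^*\otimes H^0(\O_{\P^n}(1))=\Hom\bigl(H^0(\O_{\P^n}(1))^*,S^*\bigr)\ \xrightarrow{\ f\,\mapsto\, f(u)\ }\ S^*.
\]
Since $\psi_p$ is injective as a map $S\to T$ if and only if it is surjective as a map $T^*\to S^*$, we conclude that $\psi_p$ is injective exactly when for every $v\in S^*$ there is $\xi\in T^*$ with $\varphi(\xi)(u)=v$, i.e. exactly when for every $v\in S^*$ there is an $f$ in the image of $\varphi$ with $f(u)=v$. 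Letting $p$, equivalently the nonzero vector $u$, vary over $\P^n$ turns this into condition (ii) (understood, as it must be, for nonzero $u$, the ones parametrizing points of $\P^n$), and this establishes the equivalence in both directions at once.

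The argument is really just a careful change of notation, so there is no genuine obstacle; the one thing demanding attention is the bookkeeping of the canonical isomorphisms and of the passage from a map $S\to T$ to its transpose $T^*\to S^*$, together with the observation that the scalar ambiguity in trivializing $\O_{\P^n}(-1)$ is irrelevant, since it only rescales $u$ and hence $\psi_p$, affecting neither injectivity nor surjectivity.
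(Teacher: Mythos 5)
Your proof is correct and follows essentially the same route as the paper: both convert the sheaf morphism into the linear map $\varphi$ via the Hom--tensor identification and reduce the bundle condition to a fiberwise linear-algebra statement at each $[u]\in\P^n$; the paper dualizes the sheaf map first and checks fiberwise surjectivity of $T^*\to\Hom(\langle u\rangle,S^*)$, while you check fiberwise injectivity of $S\to T$ and then transpose, which is the same computation.
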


\begin{proof} 
Taking duals, giving a morphism $S\otimes\O_{\P^n}(-1)\to
T\otimes\O_{\P^n}$ is equivalent to giving a morphism 
$$\psi:T^*\otimes\O_{\P^n}\to S^*\otimes\O_{\P^n}(1)={\mathcal
H}om(\O_{\P^n}(-1),S^*\otimes\O_{\P^n})$$ 
and this is clearly equivalent to giving linear map
$$\varphi:T^*\to H^0(S^*\otimes\O_{\P^n}(1))=
S^*\otimes H^0(\O_{\P^n}(1))
={\rm Hom}(H^0(\O_{\P^n}(1))^*,S^*).$$ Hence we need to
characterize when the morphism
$\psi$ induced by $\varphi$ is surjective, i.e. when the
fibers of $\psi$ are surjective at any point of
$\P^n$. To this purpose, we observe that, for any point
$[u]\in\P^n$ corresponding to a nonzero vector
$u\in H^0(\O_{\P^n}(1))^*$, the fiber of $\psi$ at $[u]$
is the linear map $T^*\to{\rm Hom}(<u>,S^*)$ consisting of
the restriction of $\varphi$. Hence this map is surjective if
and only if for any $v\in S^*$ there exists $f\in{\rm
Hom}(H^0(\O_{\P^n}(1))^*,S^*)$ in the image of
$\varphi$. This proves the lemma. 
\end{proof}

\begin{lemma} \label{reduced} 
With the notation of Lemma \ref{data}, the
following data are equivalent:
\begin{enumerate}
\item[(i)] A linear subspace $K\subset T^*$ contained in the kernel
of $\varphi$.
\item[(ii)] An epimorphism $F\to K^*\otimes\O_{\P^n}$.
\item[(iii)] A splitting $F=F_K\oplus(K^*\otimes\O_{\P^n})$.
\end{enumerate}
In this case, $F_K$ is the Steiner bundle corresponding, by
Lemma \ref{data}, to the natural map $T^*/K\to S^*\otimes
H^0(\O_{\P^n}(1))$. As a consequence, if $T^*_0$ is the image of
$\varphi$ and $F_0$ is the Steiner bundle corresponding to the
inclusion $T_0^*\to S^*\otimes H^0(\O_{\P^n}(1))$, then
$H^0(F_0^*)=0$ and $F=F_0\oplus(T/T_0)\otimes\O_{\P^n}$. In
particular, $H^0(F^*)=0$ if and only if $\varphi$ is injective. 
\end{lemma}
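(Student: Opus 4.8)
The plan is to prove Lemma~\ref{reduced} by chasing the equivalences in the order (i)$\Leftrightarrow$(ii)$\Leftrightarrow$(iii), all via the dictionary of Lemma~\ref{data}, and then to deduce the ``consequence'' by applying the equivalence to the specific subspace $K=\ker\varphi$.

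First I would treat (i)$\Leftrightarrow$(ii). Dualizing the resolution $0\to S\otimes\O_{\P^n}(-1)\to T\otimes\O_{\P^n}\to F\to0$ and using that $F$ is locally free, an epimorphism $F\to K^*\otimes\O_{\P^n}$ corresponds to a monomorphism $K\otimes\O_{\P^n}\to F^*\hookrightarrow T^*\otimes\O_{\P^n}$ whose composite with $T^*\otimes\O_{\P^n}\to S^*\otimes\O_{\P^n}(1)$ vanishes; since $\Hom(\O_{\P^n},\O_{\P^n})=k$, such a monomorphism is just a linear inclusion $K\hookrightarrow T^*$, and the vanishing condition says exactly that $K$ lands in the kernel of $\varphi$ (reading off global sections as in the proof of Lemma~\ref{data}). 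Here one should check that an inclusion $K\hookrightarrow\ker\varphi$ does give a genuine \emph{sub}bundle (equivalently, that $F\to K^*\otimes\O_{\P^n}$ is surjective, not merely generically so); this follows because the induced map $F\to K^*\otimes\O_{\P^n}$ is split, which is precisely the content of the next step.

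Next, (ii)$\Leftrightarrow$(iii). The implication (iii)$\Rightarrow$(ii) is trivial. For (ii)$\Rightarrow$(iii), given $K\subset\ker\varphi$, pick a complement $T^*=K\oplus W$; then $\varphi$ factors through $W\to S^*\otimes H^0(\O_{\P^n}(1))$, and I claim the associated Steiner datum still satisfies condition (ii) of Lemma~\ref{data}, so it defines a Steiner bundle $F_K$ of rank $t-s-\dim K$. The point is that surjectivity of the evaluation in Lemma~\ref{data}(ii) is a property of the \emph{image} of $\varphi$, which is unchanged by deleting $K$ from the source. Comparing resolutions, the obvious commutative diagram with rows the resolutions of $F$ and $F_K$ and the identity on $S\otimes\O_{\P^n}(-1)$ shows the quotient is $K^*\otimes\O_{\P^n}$, i.e.\ $F\cong F_K\oplus(K^*\otimes\O_{\P^n})$; concretely the splitting comes from the projection $T\to T/K^{\perp}\cong$ (dual of $W$)\,--- more cleanly, from the decomposition of $T^*$ itself. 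This also retroactively confirms the subbundle claim needed in the previous paragraph.

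Finally, for the ``consequence'': apply the equivalence with $K=\ker\varphi$, writing $T^*_0=\operatorname{im}\varphi\cong T^*/\ker\varphi$ and $F_0=F_{\ker\varphi}$. By construction $F_0$ is the Steiner bundle attached to the \emph{injective} map $T_0^*\hookrightarrow S^*\otimes H^0(\O_{\P^n}(1))$, and $F=F_0\oplus((T/T_0)\otimes\O_{\P^n})$ where I use $T_0:=(\ker\varphi)^{\perp}\subset T$ so that $T/T_0\cong(\ker\varphi)^*$. To see $H^0(F_0^*)=0$: taking global sections in the dual sequence $0\to F_0^*\to T_0^*\otimes\O_{\P^n}\xrightarrow{\psi_0} S^*\otimes\O_{\P^n}(1)$ gives $H^0(F_0^*)=\ker\big(T_0^*\to S^*\otimes H^0(\O_{\P^n}(1))\big)$, which is $\ker\varphi$ restricted to $T_0^*$, hence $0$ by injectivity; and then $H^0(F^*)=H^0(F_0^*)\oplus(T/T_0)$ vanishes iff $T=T_0$ iff $\varphi$ is injective. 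The main obstacle I anticipate is purely bookkeeping: being careful with the several dualizations (bundle duals, vector-space duals, $\P(T)$ versus $\P(T)^*$) so that the identifications $K^{\perp}$, $T/T_0\cong(\ker\varphi)^*$, etc., are consistent --- there is no hard geometry here, only the risk of a sign- or dual-confusion.
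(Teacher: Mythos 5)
Your proposal is correct and follows essentially the same route as the paper: the same comparison of the resolutions of $F$ and $F_K$ (with $F_K$ built from $T^*/K\to S^*\otimes H^0(\O_{\P^n}(1))$ via Lemma \ref{data}, noting the image of $\varphi$ is unchanged), the same dualize-and-take-cohomology argument for (ii)$\Rightarrow$(i), and the same specialization $K=\ker\varphi$ for the consequence. The only cosmetic differences are that you obtain the splitting from an explicit complement $T^*=K\oplus W$ where the paper invokes global generation of $F$, and a harmless dual slip ($T/T_0$ should be $(T/T_0)^*\cong\ker\varphi$ in the last display).
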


\begin{proof} 
The equivalence of (ii) and (iii) comes from
the fact that $F$ is generated by its global sections. In the
situation of (i), we have a map $\bar\varphi:T^*/K\to S^*\otimes
H^0(\O_{\P^n}(1))$ which, by Lemma
\ref{data}, induces a Steiner bundle $F_K$. We clearly have a
commutative  diagram
$$\begin{matrix}
&&&&0&&0&\cr
&&&&\downarrow&&\downarrow&\cr
0&\to&S\otimes\O_{\P^n}(-1)&\to&(T^*/K)^*\otimes\O_{\P^n}&\to&F_K&\to&0\cr
&&||&&\downarrow&&\downarrow&\cr
0&\to&S\otimes\O_{\P^n}(-1)&\to&T\otimes\O_{\P^n}&\to&F&\to&0\cr
&&&&\downarrow&&\downarrow&\cr
&&&&K^*\otimes\O_{\P^n}&=&K^*\otimes\O_{\P^n}&&\cr
&&&&\downarrow&&\downarrow&\cr
&&&&0&&0&
\end{matrix}$$
induced by the first two rows, so that the last column yields 
situation (ii). Reciprocally, given an epimorphism
$F\to K^*\otimes\O_{\P^n}$, the resolution of $F$ yields another
epimorphism $T\otimes\O_{\P^n}\to K^*\otimes\O_{\P^n}$, so that we can
consider $K$ as a subspace of $T^*$. We thus get a diagram as above,
now induced by its last two rows. Dualizing the diagram and taking
cohomology, we get that $\varphi:T^*\to S^*\otimes H^0(\O_{\P^n}(1))$
factorizes through $T^*/K$, so that $K$ is contained in the kernel of
$\varphi$, which is situation (i). Observe finally that $F_0$
is nothing but $F_{\ker\varphi}$.
\end{proof}

\begin{definition}{\rm 
With the above notation, we will say that a Steiner bundle is {\it
reduced} if $\varphi$ is injective, i.e. if $H^0(F^*)=0$. 
The Steiner bundle $F_0$ will be called the {\it reduced
summand} of $F$.
}\end{definition}

\begin{remark} \label{rangonreducido} {\rm
Observe that, since there are not Steiner bundles on $\P^n$ of rank
smaller than $n$ (see for instance \cite{DK} Proposition 3.9), any
Steiner bundle of rank $n$ must coincide with its reduced summand,
and hence it is reduced. Notice also that the only reduced Steiner
bundle with $s=1$ is $T_{\P^n}(-1)$. This is why we will only
consider the cases $s\ge2$.
}\end{remark}

Our generalized notion of Schwarzenberger bundle will come from the
following example, in which we will use a slightly more general
framework.

\begin{example} \label{mainconstruction} {\rm
Let $X$ be a projective variety and consider two coherent sheaves
$L,M$ on $X$, and assume $L$ is locally free. If $h^0(M)=n+1$, we
identify $\P^n$ with $\P(H^0(M)^*)$, the set of lines in
$H^0(M)$. Consider the natural composition
$$H^0(L)\otimes\O_{\P^n}(-1)\to
H^0(L)\otimes H^0(M)\otimes\O_{\P^n}\to
H^0(L\otimes M)\otimes\O_{\P^n}$$
For each nonzero $\sigma\in H^0(M)$, the fiber of the above
composition at the point $[\sigma]\in\P^n$ is
$$H^0(L)\otimes<\sigma>\to H^0(L)\otimes H^0(M)\to
H^0(L\otimes M)$$
and, identifying $H^0(L)\otimes<\sigma>$ with
$H^0(L)$ we get that the composition is injective
since it can be identified with
$H^0(L)\mapright{\cdot\sigma}H^0(L\otimes M)$. We thus have a
Steiner vector bundle $F$ defined as a cokernel
$$0\to H^0(L)\otimes\O_{\P^n}(-1)\to
H^0(L\otimes M)\otimes\O_{\P^n}\to F\to0.$$
Observe that the map $\varphi$ of Lemma \ref{data} is, in this
case, the dual of the multiplication map $H^0(L)\otimes H^0(M)\to
H^0(L\otimes M)$. In particular, $F$ is reduced if and only if
this multiplication map is surjective.
}\end{example}

\begin{definition}{\rm
Let $X$ be a projective variety, and let $L,M$ be globally
generated vector bundles on $X$. We will call {\it Schwarzenberger
bundle} of the triplet $(X,L,M)$ to the Steiner vector bundle
constructed in Example \ref{mainconstruction}.
}\end{definition}

\begin{remark} \label{geometriaSchwarzenberger}{\rm 
Following Remark \ref{geometria}, the geometry of a Schwarzenberger
bundle $F$ when $L$ and $M$ are line bundles is related to the
geometry of the map $\varphi_{L\otimes M}:X\to\P(H^0(L\otimes M))$
defined by $L\otimes M$. Indeed, in this case, $\P^n$ is identified
with the complete linear series $|M|$ of effective divisors on
$X$. For each $D\in|M|$, Example \ref{mainconstruction} shows that
the fiber $F_D$ is the cokernel of the map 
$H^0(L)\to H^0(L\otimes M)$ defined by a section of $M$ vanishing
at $D$. Hence the projectivization $\P(F_D)\subset\P(H^0(L\otimes
M))$ is the linear span of the divisor $D$ regarded as a subset
in $\P(H^0(L\otimes M))$ via $\varphi_{L\otimes M}$. Thus Remark
\ref{geometria} is saying that the set of these linear spans can be
constructed by fixing  linear subspaces
$\Lambda_1,\dots,\Lambda_s\subset\P(H^0(L\otimes M))$,
defining common parametrizations of the $\P(H^0(L\otimes
M))^*_{\Lambda_i}$ and taking the intersection of
corresponding hyperplanes.

Therefore, when considering only Schwarzenberger bundles coming
from line bundles, Question \ref{question} can be stated geometrically
as: {\sl Given $s$ linear subspaces
$\Lambda_1,\dots,\Lambda_s\subset\P(T)$ of codimension $n+1$
such that the $\P(T)^*_{\Lambda_i}$ are parametrized by the
same $\P^n$, do the intersections of the corresponding
hyperplanes describe the span of the divisors of some
complete linear system of a variety?}
}\end{remark}

We give now four representative examples of Schwarzenberger
bundles:

\begin{example} \label{schwarzenberger}{\rm 
When $(X,L,M)=(\P^1,\O_{\P^1}(s-1),\O_{\P^1}(n))$, one obtains an
$(s,s+n)$-Steiner bundle of rank $n$, which is precisely the
vector bundle constructed by Schwarzenberger. If
$s=2$, Remark \ref{geometria} provides for any $(2,n+2)$-Steiner
bundle the classical Steiner construction of the rational
normal curve in $\P^{n+1}$, so that the answer to Question
\ref{question} is positive. However, if $s>2$, a general
$(s,s+n)$-Steiner bundle is not a Schwarzenberger bundle
(see \cite{AO} or \cite{V1}).
}\end{example}

\begin{example} \label{superficieminimal}{\rm 
Let $F=\oplus_{i=1}^{t-s}\O_{\P^1}(a_i)$ with $a_i\ge1$ for
$i=1,\dots,t-s$, and assume $\deg F=a_1+\dots+a_{t-s}=s$. Write
$X=\P(F)$ and let $\O_X(h)$ denote the tautological quotient line
bundle (equivalently, $X$ is a smooth rational normal scroll
$X\subset\P^{t-1}$ of dimension
$t-s$ and degree $s$). If $f$ is the class of a
fiber of the scroll, the positivity of the $a_i$ implies that
$L:=\O_X(h-f)$ is globally generated. Then, if $M=\O_X(f)$, the
Schwarzenberger bundle of $(X,L,M)$ is an $(s,t)$-Steiner
bundle on $\P^1$. By the geometric interpretation given in Remark
\ref{geometriaSchwarzenberger}, the fiber of this Schwarzenberger bundle
at any point of $\P^1$ is nothing but the corresponding fiber of the
scroll $X$. Therefore, this Schwarzenberger bundle is precisely the
original $F$. This shows that any ample vector bundle on $\P^1$ is a
Schwarzenberger bundle. Observe
that $F$ can also be regarded as the Schwarzenberger bundle of the
triplet $(\P^1,F(-1),\O_{\P^1}(1))$.
}\end{example}

We consider next the symmetric example with respect to the previous
one, by just permuting $L$ and $M$. Observe that, even if this
permutation produces different vector bundles (in fact defined on
different projective spaces), most of our results on Steiner bundles
will keep some symmetry of this type (for example, in Theorem
\ref{teoremon} the roles of $n+1$ and $s$ are symmetric).

\begin{example} \label{rationalnormalscroll}{\rm 
Let $X$ be a smooth rational normal scroll
$X\subset\P^{t-1}$ of dimension $t-n-1$ and degree $n+1$ defined by
$E=\oplus_{i=1}^{t-n-1}\O_{\P^1}(a_i)$ with $a_i\ge1$ for
$i=1,\dots,t-n+1$. Let $h,f$
be denote respectively the the class of a hyperplane and a fiber of
the scroll. Then, if $L=\O_X(f)$ and $M=\O_X(h-f)$, the
Schwarzenberger bundle of $(X,L,M)$ is a $(2,t)$-Steiner bundle. We
will see in Theorem \ref{teoremon}(iv) that in this case any
$(2,t)$-Steiner bundle is obtained in this way (the case $t=n+2$
is exactly the case $s=2$ of Example \ref{schwarzenberger}). As before, $F$
can also be regarded as the Schwarzenberger bundle of the triplet
$(\P^1,\O_{\P^1}(1),E(-1))$.
}\end{example}

\begin{example} \label{Veronese}{\rm 
The Schwarzenberger bundle of the triplet
$(\P^2,\O_{\P^2}(1),\O_{\P^2}(1))$ is a
$(3,6)$-Steiner bundle $F$ of rank three over
$\P^2$. If we identify this last $\P^2$ with the set of conics of
the Veronese surface $V\subset\P^5$, then the projectivization of
the fiber of $F$ at the element of $\P^2$ corresponding to a conic
$C\subset V$ gives the plane of $\P^5$ spanned by $C$. In fact,
it follows $F=S^2(T_{\P^2}(-1))$ (see \cite{B} p. 615), so that
$F_{|L}=\O_L\oplus\O_L(1)\oplus\O_L(2)$ for any line
$L\subset\P^2$. We will see in Remark \ref{pocosjumping} that a
general $(3,6)$-Steiner bundle is not obtained in this way.
}\end{example}

We end this section by reformulating in terms of our
generalized Schwarzenberger bundles the results of Re about
the multiplication map for vector bundles (we will improve his results
in Corollary \ref{re} in the case of rank one). This will imply in
particular that our generalized Schwarzenberger bundles  of rank $n$
are exactly those constructed originally by Schwarzenberger:

\begin{proposition} \label{unicoschwarzenberger} 
Let $F$ be an $(s,t)$-Steiner bundle on $\P^n$ that is the
Schwarzenberger bundle of a triplet $(X,L,M)$, with rk$(L)=a$ and
rk$(M)=b$. Then:
\begin{enumerate} 
\item[(i)] $t\ge bs+a(n+1)-ab$.
\item[(ii)] If equality holds in (i), then $F$ is the
Schwarzenberger bundle of a triplet $(\P^1,L,M)$, where
$\deg(L)=s-a$ and $\deg(M)=n+1-b$.
\item[(iii)] Any Schwarzenberger bundle of rank $n$ is as in
Example \ref{schwarzenberger}.
\end{enumerate}
\end{proposition}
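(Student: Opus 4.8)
The plan is to exploit the description of the Schwarzenberger bundle $F$ of $(X,L,M)$ via the multiplication map $\mu\colon H^0(L)\otimes H^0(M)\to H^0(L\otimes M)$. By Example~\ref{mainconstruction}, $F$ has resolution $0\to H^0(L)\otimes\O_{\P^n}(-1)\to H^0(L\otimes M)\otimes\O_{\P^n}\to F\to 0$, so that $s=h^0(L)$ and $t=h^0(L\otimes M)$, while $n+1=h^0(M)$ by construction. To prove (i) I would first reduce to the reduced case: by Lemma~\ref{reduced}, replacing $T^*=H^0(L\otimes M)^*$ by the image of $\varphi$ (equivalently, replacing $L\otimes M$ by the subsheaf generated by $\mu$, or simply noting $t$ can only decrease when we pass to the reduced summand while $s$ and $n+1$ are unchanged) lets us assume $\mu$ is surjective. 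Then (i) becomes the inequality $h^0(L\otimes M)\ge b\,h^0(L)+a\,h^0(M)-ab$, which is exactly (the rank-$a$, rank-$b$ generalization of) Re's bound on the multiplication map in \cite{R}; I would invoke that result, after checking its hypotheses (global generation of $L$ and $M$, which we have) are met. In the case $a=b=1$ this is the classical statement $h^0(L\otimes M)\ge h^0(L)+h^0(M)-1$ for globally generated line bundles, provable directly by choosing a section of $M$ with smooth divisor $D$ and using $0\to L\to L\otimes M\to (L\otimes M)|_D\to 0$ together with $h^0((L\otimes M)|_D)\ge h^0(M|_D)\ge h^0(M)-1$.

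For (ii), suppose equality holds. The equality case in Re's theorem forces a very rigid structure: intuitively, the multiplication map is ``as non-surjective-free as possible,'' which classically (for $a=b=1$) forces $X$, or at least the image of $X$ under the relevant morphisms, to be a curve on which $L$ and $M$ are essentially $\O(d)$ for the minimal possible $d$. Concretely, I would argue that equality in the Castelnuovo-type bound above propagates along a pencil: taking the divisor $D\in|M|$ smooth and using the exact sequence, equality forces $h^0((L\otimes M)|_D)=h^0(M|_D)+h^0(L)-1$ and $h^1(L)=0$, and iterating/restricting further one is driven to the situation where the image curve of $X$ under $\varphi_{L\otimes M}$ is a rational normal curve of degree $t-1$ in $\P^{t-1}$ and the pencil $|M|$ cuts out degree-$(n+1-b)$ divisors, i.e.\ $M\cong\O_{\P^1}(n+1-b)$ and $L\cong\O_{\P^1}(s-a)$ after the reduction; the bundle $F$ thus equals the Schwarzenberger bundle of $(\P^1,\O_{\P^1}(s-a),\O_{\P^1}(n+1-b))$ (with suitable twists summing the ranks $a,b$), which is the content of (ii). The honest way to run this for general $a,b$ is to cite the equality case in Re's paper and translate it through Example~\ref{mainconstruction}; the rank-one subcase admits the self-contained pencil argument just sketched.

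Finally, (iii) is an immediate specialization of (i) and (ii): a Schwarzenberger bundle of rank $n$ has $t-s=n$, i.e.\ $t=s+n$; plugging into (i) gives $s+n\ge bs+a(n+1)-ab=bs+a(n+1-b)$, and since $a\ge 1$, $b\ge 1$ (as $L,M$ are nonzero globally generated bundles, and in fact $b\le h^0(M)=n+1$, $a\le h^0(L)=s$) a short numerical check shows this forces $a=b=1$; then equality holds in (i), so (ii) applies and $F$ is the Schwarzenberger bundle of $(\P^1,\O_{\P^1}(s-1),\O_{\P^1}(n))$, which is precisely Example~\ref{schwarzenberger}. I expect the main obstacle to be the equality case (ii): getting from ``equality in the numerical bound'' to ``$X$ may be taken to be $\P^1$ with the bundles being the extremal ones'' requires either a careful invocation of the precise equality statement in \cite{R} (checking it covers globally generated bundles of arbitrary rank, not just line bundles or very ample ones) or, in the rank-one case, a careful Castelnuovo-style pencil argument tracking when each cohomological inequality is sharp and deducing rationality of the image curve and the degrees of $L,M$.
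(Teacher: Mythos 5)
Your proposal is correct and follows essentially the same route as the paper: part (i) is Re's Theorem~1 applied to the multiplication map $H^0(L)\otimes H^0(M)\to H^0(L\otimes M)$, part (ii) is the equality case (Re's Theorem~2, which produces the map $f:X\to\P^1$ and the pullback bundles $L',M'$, with the degrees then read off from Riemann--Roch), and part (iii) is the same numerical computation combining $t=s+n$ with (i) and $h^0(L)\ge a$, $h^0(M)\ge b$ to force $a=b=1$ and hence equality in (i). The extra reduction to the reduced summand in (i) and the self-contained pencil sketch for the rank-one case are harmless but not needed; the substance of the argument coincides with the paper's.
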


\begin{proof} 
By \cite{R} Theorem 1 we have $h^0(L\otimes M)\ge bh^0(L)+ah^0(M)-ab$,
which is inequality (i). Moreover, \cite{R} Theorem 2 says that, when
the above inequality is an equality, then there exists a map
$f:X\to\P^1$ and vector bundles $L',M'$ on
$\P^1$ such that $L=f^*L'$, $H^0(L)=f^*H^0(L')$, $M=f^*M'$ and
$H^0(M)=f^*H^0(M')$. This means that $F$ is also the
Schwarzenberger bundle of the triplet $(\P^1,L',M')$. This proves
(ii), since Riemann-Roch theorem for vector bundles on $\P^1$
implies $s=\deg(L')+a$ and $n+1=\deg(M')+b$.

In order to prove (iii), observe that $F$ has rank $n$ if and only if
$t=h^0(L\otimes M)=h^0(L)+h^0(M)-1$. Since $L$ and $M$ are
globally generated, it follows $h^0(L)\ge a$ and $h^0(M)\ge
b$. Therefore
$$t-bs-a(n+1)+ab=(h^0(L)+h^0(M)-1)-bh^0(L)-ah^0(M)+ab=$$
$$=-(b-1)h^0(L)-(a-1)h^0(M)+ab-1\le$$
$$\le(b-1)a+(a-1)b+ab-1=-(a-1)(b-1)\le0.$$
By (i) we have that all inequalities are equalities and in
particular $a=b=1$, and by (ii) we also have that $F$ is the
Schwarzenberger bundle of a triplet $(\P^1,L,M)$, where $L$ and
$M$ are line bundles on $\P^1$ of respective degrees $s-1$ and
$n$, from which the result follows.
\end{proof}

\section{Jumping subspaces of Steiner bundles}

In order to answer Question \ref{question}, one needs to try to
produce a triplet $(X,L,M)$ from a Steiner bundle $F$. The main
idea to find a candidate for $X$ comes from the fact that, since
$M$ is a globally generated vector bundle of rank $b$, any point
$x\in X$ yields a $b$-codimensional subspace
$H^0(M\otimes\J_x)\subset H^0(M)$ consisting of the
sections of $M$ vanishing at $x$. Thus the points of $X$ give
particular linear subspaces of codimension $b$ in the projective
space $\P^n=\P(H^0(M)^*)$ on which the Schwarzenberger bundle is
defined. Hence our goal is to look for some special property
of these linear subspaces for Schwarzenberger bundles and see
whether, for an arbitrary Steiner bundle, the set of 
subspaces satisfying that property could play the role of
$X$. This is the scope of the following:

\begin{lemma} \label{jumpingSchwarzenberger}
Let $F$ be a Steiner bundle over $\P^n$. Then:
\begin{enumerate}
\item[(i)] For any non-empty linear subspace $\Lambda\subset\P^n$,
there is a canonical commutative diagram
$$\begin{matrix}
&&&S^*\otimes
H^0(\J_\Lambda(1))&\mapright{\cong}&H^1(F^*\otimes\J_\Lambda)&\cr
&&&\downarrow&&\mapdown{\phi}\cr
&T^*&\mapright{\varphi}&S^*\otimes
H^0(\O_{\P^n}(1))&\to&H^1(F^*)&\to&0 
\end{matrix}$$ 
\item[(ii)] If $F$ is the Schwarzenberger bundle of the
triplet $(X,L,M)$ and $\Lambda\subset\P^n$ is the 
subspace corresponding to $H^0(M\otimes\J_x)\subset H^0(M)$ for
some $x\in X$, then there exists an
$a$-dimensional linear subspace $A\subset S^*$ such that
$A\otimes H^0(\J_\Lambda(1))$ is in the kernel of $\phi$.
\end{enumerate}
\end{lemma}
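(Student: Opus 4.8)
The plan is to produce the commutative diagram in (i) from the defining resolution of $F$ and its dual, and then in (ii) to identify the relevant pieces explicitly in the Schwarzenberger setting so that the $a$-dimensional subspace $A$ becomes visible.

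\textbf{Step 1 (proof of (i)).} Starting from the resolution $0\to S\otimes\O_{\P^n}(-1)\to T\otimes\O_{\P^n}\to F\to0$, I dualize to get $0\to F^*\to T^*\otimes\O_{\P^n}\mapright{\psi} S^*\otimes\O_{\P^n}(1)\to0$ (the cokernel is locally free because, as in Lemma \ref{data}, $\psi$ is surjective; here I use $s\ge2$ so there are no issues). Tensoring this short exact sequence with $\O_{\P^n}$ and taking the long exact cohomology sequence gives, since $H^1(\O_{\P^n})=0$ and $H^1(\O_{\P^n}(1))=0$, the surjection $S^*\otimes H^0(\O_{\P^n}(1))\to H^1(F^*)\to0$ together with the map $\varphi:T^*\to S^*\otimes H^0(\O_{\P^n}(1))$ as its kernel-inclusion; this is the bottom row. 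For the top row I tensor the dual sequence with $\J_\Lambda$: since $F^*\otimes\J_\Lambda$, $T^*\otimes\J_\Lambda$, $S^*\otimes\J_\Lambda(1)$ all have the expected vanishing ($H^0(\J_\Lambda)=0$ for $\Lambda$ non-empty, $H^1(\J_\Lambda(1))=0$ because $\J_\Lambda$ is the ideal of a linear space, hence $2$-regular, and $H^1(\O_{\P^n}(1))=0$), the connecting map gives an isomorphism $S^*\otimes H^0(\J_\Lambda(1))\mapright{\cong} H^1(F^*\otimes\J_\Lambda)$. The vertical maps are induced by the inclusion $\J_\Lambda\hookrightarrow\O_{\P^n}$, which makes everything commute by naturality of the long exact sequence; the left vertical map is simply $S^*\otimes H^0(\J_\Lambda(1))\hookrightarrow S^*\otimes H^0(\O_{\P^n}(1))$ and the right vertical map $\phi$ is the natural map $H^1(F^*\otimes\J_\Lambda)\to H^1(F^*)$.

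\textbf{Step 2 (proof of (ii)).} Now let $F$ be the Schwarzenberger bundle of $(X,L,M)$, so $S=H^0(L)$, and $\Lambda\subset\P^n=\P(H^0(M)^*)$ is the linear span of the sections of $M$ vanishing at a fixed $x\in X$, i.e. $\Lambda=\P((H^0(M)/H^0(M\otimes\J_x))^*)$ viewed inside $\P(H^0(M)^*)$. I want an $a$-dimensional $A\subset S^*=H^0(L)^*$ with $A\otimes H^0(\J_\Lambda(1))\subset\ker\phi$. By the diagram in (i), an element of $S^*\otimes H^0(\J_\Lambda(1))$ lies in $\ker\phi$ iff its image in $S^*\otimes H^0(\O_{\P^n}(1))$ lies in the image of $\varphi$, and by Example \ref{mainconstruction} the image of $\varphi$ is the annihilator of $\ker(H^0(L)\otimes H^0(M)\to H^0(L\otimes M))$ inside $H^0(L)^*\otimes H^0(M)=\Hom(H^0(M)^*,H^0(L)^*)$ (identifying $H^0(\O_{\P^n}(1))=H^0(M)$). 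The natural candidate for $A$ is the image of the coevaluation: the rank-$a$ evaluation $H^0(L)\to L_x$ dualizes to an $a$-dimensional subspace $A=L_x^*\hookrightarrow H^0(L)^*=S^*$, namely the functionals on $H^0(L)$ that factor through the fiber at $x$. Concretely, I should show that for $\alpha\in L_x^*\subset H^0(L)^*$ and any $\xi\in H^0(\J_\Lambda(1))\subset H^0(M)$ — note $\xi$, being a section of $M$ vanishing on $\Lambda$, is exactly a section vanishing at $x$, i.e. $\xi\in H^0(M\otimes\J_x)$ — the rank-one tensor $\alpha\otimes\xi\in\Hom(H^0(M)^*,H^0(L)^*)$ kills $\ker(\text{mult})$.

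\textbf{Step 3 (the computation, and the main obstacle).} The verification is the genuine content: given $\sum_i t_i\otimes m_i\in\ker(H^0(L)\otimes H^0(M)\to H^0(L\otimes M))$, I must check $\sum_i \alpha(t_i)\,\langle\xi,m_i^\vee\rangle=0$ — more precisely, applying $\alpha\otimes\xi$, viewed as a bilinear form on $H^0(L)\otimes H^0(M)$ via the pairing, to the relation $\sum_i t_i m_i=0$ in $H^0(L\otimes M)$. The point is that $\alpha$ comes from evaluation at $x$ and $\xi$ vanishes at $x$: multiplying the relation $\sum_i t_i m_i=0$ and evaluating at $x$ after twisting by $\xi$, one gets $\sum_i (t_i\cdot\xi)(x)\cdot\alpha(\ldots)$-type expression that vanishes because $\xi(x)=0$. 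The cleanest way is to observe that the rank-one form $\alpha\otimes\xi$ on $H^0(L)\otimes H^0(M)$ factors as $H^0(L)\otimes H^0(M)\to H^0(L\otimes M)\to (L\otimes M)_x \mapright{\text{some functional}} k$, where the last step uses that $\alpha\in L_x^*$ and $\xi(x)=0$ so $\xi$ contributes a well-defined element of $\Hom((L\otimes M)_x, L_x\otimes k)$... I will need to set up the right pairing so this factorization is literally correct; \emph{this bookkeeping of duals and fibers is the part most likely to trip up the write-up}, since $S^*\otimes H^0(M)$ must be read as $\Hom(H^0(M)^*,S^*)$ in one place and as the dual of $S\otimes H^0(M)$ in another, and one must keep $H^0(\J_\Lambda(1))$ and $H^0(M\otimes\J_x)$ carefully identified. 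Once $\alpha\otimes\xi$ is shown to annihilate $\ker(\text{mult})$ for all $\alpha\in A$ and all $\xi\in H^0(\J_\Lambda(1))$, it lies in the image of $\varphi$, hence in $\ker\phi$ by the diagram of (i), so $A\otimes H^0(\J_\Lambda(1))\subset\ker\phi$ as required, and $\dim A=\mathrm{rk}(L)=a$ because $L$ is globally generated (so $L_x$ has dimension exactly $a$).
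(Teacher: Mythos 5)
Your part (i) is correct and is exactly the paper's argument (take cohomology of the dual resolution and of its twist by $\J_\Lambda$), and your choice of $A=L_x^*\subset H^0(L)^*=S^*$ in part (ii) is also the right one. But the verification in Steps 2--3, which you yourself flag as the part ``most likely to trip up the write-up,'' does trip you up, and on a point that is not merely notational. You identify $H^0(\J_\Lambda(1))$ with $H^0(M\otimes\J_x)$, the sections of $M$ vanishing at $x$. This cannot be right: $\Lambda$ has codimension $b$, so $H^0(\J_\Lambda(1))$ has dimension $b$, whereas $H^0(M\otimes\J_x)$ has dimension $n+1-b$. With the paper's conventions ($\P^n=\P(H^0(M)^*)$ is the set of lines in $H^0(M)$, so $H^0(\O_{\P^n}(1))=H^0(M)^*$), the subspace $\Lambda$ is the set of lines contained in $H^0(M\otimes\J_x)$, and hence
$$H^0(\J_\Lambda(1))=\bigl(H^0(M)/H^0(M\otimes\J_x)\bigr)^*\cong M_x^*,$$
the functionals on $H^0(M)$ that factor through evaluation at $x$ --- the exact dual of what you wrote. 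Your Step 3 is built on the wrong identification: a section $\xi$ with $\xi(x)=0$ induces the zero functional on $M_x$, so the proposed factorization of $\alpha\otimes\xi$ through $(L\otimes M)_x$ (where ``$\xi$ contributes a well-defined element of $\Hom((L\otimes M)_x,L_x\otimes k)$'') has no meaning, and the expression $\sum_i\alpha(t_i)\langle\xi,m_i^\vee\rangle$ is not defined. Since you leave the computation at ``I will need to set up the right pairing,'' the key step of (ii) is genuinely missing.

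The repair is short and is what the paper does. Take $A=L_x^*$ and use $H^0(\J_\Lambda(1))=M_x^*$. For $\alpha\in L_x^*$ and $\xi\in M_x^*$, the tensor $\alpha\otimes\xi$ is the functional $s\otimes m\mapsto\alpha(s(x))\,\xi(m(x))$ on $H^0(L)\otimes H^0(M)$, which factors as $H^0(L)\otimes H^0(M)\to H^0(L\otimes M)\to(L\otimes M)_x=L_x\otimes M_x\mapright{\alpha\otimes\xi}k$ because multiplication of sections commutes with evaluation at $x$. Equivalently, dualizing that commutative square shows $\varphi$ maps $(L_x\otimes M_x)^*\subset T^*$ isomorphically onto $L_x^*\otimes M_x^*=A\otimes H^0(\J_\Lambda(1))$. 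So $A\otimes H^0(\J_\Lambda(1))$ annihilates $\ker(\mathrm{mult})$ and lies in the image of $\varphi$, hence in $\ker\phi$ by your diagram from (i); and $\dim A=a$ by global generation of $L$, as you say.
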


\begin{proof} 
Diagram (i) comes by taking cohomology in the dual of the
resolution of $F$ and its twist by $\J_\Lambda$. For (ii), if $F$
is the Schwarzenberger bundle of the triplet $(X,L,M)$, we have 
$$H^0(\O_{\P^n}(1))=H^0(M)^*,\quad S=H^0(L),\quad
T=H^0(L\otimes M)$$ 
and $\varphi$ is the dual of the multiplication map
$H^0(L)\otimes H^0(M)\to H^0(L\otimes M)$. Moreover, if $\Lambda$
is the linear subspace corresponding to
$H^0(M\otimes\J_x)\subset H^0(M)$, for some $x\in X$, we also have
$H^0(\J_\Lambda(1))=H^0(M_x)^*$. It is clear that $\varphi$ maps
$H^0(L_x\otimes M_x)^*$ isomorphically to $H^0(L_x)^*\otimes
H^0(M_x)^*$. Hence, it follows that
$H^0(L_x)^*\otimes H^0(\J_\Lambda(1))$ is mapped to zero in
$H^1(F^*)$. 
\end{proof}

This suggests the following:

\begin{definition} {\rm
Let $F$ be a Steiner bundle over $\P^n$. An {\it $(a,b)$-jumping
subspace of $F$} is a $b$-codimension subspace
$\Lambda\subset\P^n$ satisfying that, with the identification
given in (1), there exists an
$a$-dimensional linear subspace $A\subset S^*$ such that
$A\otimes H^0(\J_\Lambda(1))$ is in the kernel of the natural
map $H^1(F^*\otimes\J_\Lambda)\to H^1(F^*)$. The pair
$(A,\Lambda)$ will be called {\it $(a,b)$-jumping pair of
$F$}. We will write $J_{a,b}(F)$ and $\tilde J_{a,b}(F)$ to
denote respectively the set of $(a,b)$-jumping subspaces 
and the set of $(a,b)$-jumping pairs of $F$. We will also write
$\Sigma_{a,b}(F)$ to denote the set of subspaces $A\subset S^*$ for
which there exists a $b$-codimensional subspace $\Lambda\subset\P^n$
such that $(A,\Lambda)$ is an $(a,b)$-jumping subspace of $F$. A
$(1,1)$-jumping subspace (resp. pair) will be called simply a
{\it jumping hyperplane} (resp. {\it pair}), and we will just
write $J(F)$ (resp. $\tilde J(F)$) to denote the set of jumping
hyperplanes (resp. pairs) of $F$. Similarly we will write
$\Sigma(F):=\Sigma_{1,1}(F)$.
}\end{definition}

We prove next a series of easy properties of jumping
spaces and pairs:

\begin{lemma} \label{propjumping}
Let $F$ be a Steiner bundle over $\P^n$. The following hold:
\begin{enumerate}
\item[(i)] For any $a,b$, the set of $(a,b)$-jumping pairs of $F$
coincides with the set of $(a,b)$-jumping pairs of its reduced summand
$F_0$. In particular, $J_{a,b}(F)=J_{a,b}(F_0)$ and
$\Sigma_{a,b}(F)=\Sigma_{a,b}(F_0)$
\item[(ii)] If $A\subset S^*$ is a linear
subspace of dimension $a$ and $\Lambda\subset\P^n$ is a subspace
of codimension $b$, then $(A,\Lambda)$ is an $(a,b)$-jumping pair of
$F$ if and only if $A\otimes H^0(\J_\Lambda(1))$ is in the image
$T_0^*$ of
$\varphi:T^*\to S^*\otimes H^0(\O_{\P^n}(1))$. 
\item[(iii)] Any $(a,b)$-jumping pair $(A,\Lambda)$ of $F$ induces,
in a canonical way, a split quotient ${F_0}_{|\Lambda}\to
A^*\otimes H^0(\J_\Lambda(1))^*\otimes\O_\Lambda$.
\item[(iv)] If $b=1$, a hyperplane $H\subset\P^n$ is an
$(a,1)$-jumping subspace if and only if there is a quotient
${F_0}{|H}\to\O_H^{\oplus a}$, i.e.  $h^0(F_{|H}^*)\ge
h^0(F^*)+a$.
\end{enumerate}
\end{lemma}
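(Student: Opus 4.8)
The plan is to prove (ii) first — it is essentially a transcription of the commutative diagram of Lemma \ref{jumpingSchwarzenberger}(i) — and then to bootstrap (i), (iii), (iv) from it. For (ii), that diagram shows that the composite $S^*\otimes H^0(\J_\Lambda(1))\xrightarrow{\ \cong\ }H^1(F^*\otimes\J_\Lambda)\xrightarrow{\ \phi\ }H^1(F^*)$ agrees with $S^*\otimes H^0(\J_\Lambda(1))\hookrightarrow S^*\otimes H^0(\O_{\P^n}(1))\longrightarrow H^1(F^*)$, and that the last arrow is precisely the cokernel of $\varphi$, hence has kernel $T_0^*=\operatorname{im}\varphi$. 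Since the inclusion $S^*\otimes H^0(\J_\Lambda(1))\hookrightarrow S^*\otimes H^0(\O_{\P^n}(1))$ is injective and sends $A\otimes H^0(\J_\Lambda(1))$ to itself inside $S^*\otimes H^0(\O_{\P^n}(1))$, we conclude that $A\otimes H^0(\J_\Lambda(1))$ lies in $\ker\phi$ iff it lies in $T_0^*$; that is (ii). Part (i) follows immediately: by Lemma \ref{reduced} the reduced summand $F_0$ has the same image $T_0^*$ and $\varphi$ corestricted to $T_0^*$ is the map defining $F_0$, so the criterion of (ii) returns literally the same set of jumping pairs for $F$ and for $F_0$; projecting onto the two factors gives $J_{a,b}(F)=J_{a,b}(F_0)$ and $\Sigma_{a,b}(F)=\Sigma_{a,b}(F_0)$. (Alternatively one argues (i) directly from $F=F_0\oplus(T/T_0)\otimes\O_{\P^n}$ and the vanishings $H^0(\J_\Lambda)=H^1(\J_\Lambda)=H^1(\O_{\P^n})=0$, which identify $H^1(F^*\otimes\J_\Lambda)\to H^1(F^*)$ with $H^1(F_0^*\otimes\J_\Lambda)\to H^1(F_0^*)$ compatibly with the identification of $(1)$.)

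By (i) we may assume in (iii) and (iv) that $F=F_0$ is reduced, so $\varphi$ is injective. For (iii): since $F$ is locally free, restricting its resolution to $\Lambda$ keeps it exact and presents $F_{|\Lambda}$ as a Steiner bundle on $\Lambda$ whose associated map, in the sense of Lemma \ref{data}, is the composite $\varphi_\Lambda\colon T^*\xrightarrow{\varphi}S^*\otimes H^0(\O_{\P^n}(1))\twoheadrightarrow S^*\otimes H^0(\O_\Lambda(1))$. By (ii), $A\otimes H^0(\J_\Lambda(1))\subseteq\operatorname{im}\varphi$, so $K:=\varphi^{-1}\bigl(A\otimes H^0(\J_\Lambda(1))\bigr)$ is carried isomorphically by $\varphi$ onto $A\otimes H^0(\J_\Lambda(1))$; since $H^0(\J_\Lambda(1))$ dies in $H^0(\O_\Lambda(1))$ we get $K\subseteq\ker\varphi_\Lambda$. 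Applying Lemma \ref{reduced} to $F_{|\Lambda}$ produces the split quotient $F_{|\Lambda}\to K^*\otimes\O_\Lambda$, and the isomorphism $K\cong A\otimes H^0(\J_\Lambda(1))$ turns $K^*$ into $A^*\otimes H^0(\J_\Lambda(1))^*$, which is (iii) (with $F_0$ in place of $F$ in the non-reduced case).

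For (iv), set $b=1$ and $\Lambda=H$ a hyperplane, so $h^0(\J_H(1))=1$ and the split quotient of (iii) reads $F_{0|H}\to A^*\otimes\O_H\cong\O_H^{\oplus a}$; thus an $(a,1)$-jumping hyperplane admits such a quotient. Conversely a quotient $F_{0|H}\to\O_H^{\oplus a}$ dualizes to a sheaf inclusion $\O_H^{\oplus a}\hookrightarrow F_{0|H}^*$, whence $h^0(F_{0|H}^*)\ge a$. To turn this back into a jumping condition, fix a linear form $\ell$ with $H=\{\ell=0\}$, so $S^*\otimes H^0(\J_H(1))=S^*\otimes\langle\ell\rangle$; writing $F_{0|H}$ as its reduced summand plus $(\ker\varphi_H)^*\otimes\O_H$ via Lemma \ref{reduced} gives $h^0(F_{0|H}^*)=\dim\ker\varphi_H$, and injectivity of $\varphi$ identifies $\ker\varphi_H=\varphi^{-1}(S^*\otimes\langle\ell\rangle)$, in dimension, with $\{v\in S^*:v\otimes\ell\in\operatorname{im}\varphi\}$. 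By (ii) this last space has dimension $\ge a$ exactly when $H$ is $(a,1)$-jumping, so the first two assertions of (iv) are equivalent. Finally $F=F_0\oplus(T/T_0)\otimes\O_{\P^n}$ with $h^0(F_0^*)=0$ gives $h^0(F^*)=\dim(T/T_0)$ and $h^0(F_{|H}^*)=h^0(F_{0|H}^*)+\dim(T/T_0)$, so $h^0(F_{0|H}^*)\ge a\iff h^0(F_{|H}^*)\ge h^0(F^*)+a$, the stated cohomological form.

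The one point I expect to require genuine care, rather than bookkeeping, is the compatibility invoked in (iii) and again in (iv): that restricting the resolution of a Steiner bundle to a linear subspace $\Lambda$ again gives the resolution of a Steiner bundle, and that its Lemma \ref{data} map is the composite of $\varphi$ with the restriction $S^*\otimes H^0(\O_{\P^n}(1))\to S^*\otimes H^0(\O_\Lambda(1))$. Establishing this cleanly amounts to unwinding the correspondence of Lemma \ref{data} and checking its naturality under pullback along $\Lambda\hookrightarrow\P^n$. Everything else — exactness of the various dualized and twisted sequences, and the vanishings $H^0(\J_\Lambda)=H^1(\J_\Lambda)=H^1(\O_{\P^n})=0$ for a nonempty linear $\Lambda$ — is routine.
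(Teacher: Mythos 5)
Your proposal is correct and follows essentially the same route as the paper's proof: (ii) is read off from the diagram of Lemma \ref{jumpingSchwarzenberger}(i), (i) comes from the splitting $F=F_0\oplus(T/T_0)\otimes\O_{\P^n}$, (iii) from restricting the defining map to $\Lambda$ and applying Lemma \ref{reduced}, and the converse in (iv) from the one-dimensionality of $H^0(\J_H(1))$, which forces the relevant kernel to have the form $A\otimes H^0(\J_H(1))$. Your reordering (deriving (i) from (ii)) and your phrasing of (iv) via $\ker\varphi_H$ rather than the cohomology sequence are only cosmetic differences.
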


\begin{proof} 
Part (i) is obvious from the splitting (see Lemma \ref{reduced})
$F=F_0\oplus(T/T_0)\otimes\O_{\P^n}$, so that the maps
$H^1(F^*\otimes\J_\Lambda)\to H^1(F^*)$ and
$H^1(F_0^*\otimes\J_\Lambda)\to H^1(F_0^*)$ are the same for any
subspace $\Lambda$. Part (ii) follows at once from Lemma
\ref{jumpingSchwarzenberger}(i).

To prove (iii), let $(A,\Lambda)$ be a jumping pair of $F$. By (ii),
this means that $A\otimes H^0(\J_\Lambda(1))$ can be regarded as a
subspace of $T_0^*$. On the other hand, recall that
$F_0$ is the Steiner bundle constructed (see Lemma \ref{data})
from the inclusion $T_0^*\to S^*\otimes H^0(\O_{\P^n}(1))$. It is
clear that
${F_0}_{|\Lambda}$ is the Steiner bundle constructed from the
composition 
$$T_0^*\to S^*\otimes H^0(\O_{\P^n}(1))\to S^*\otimes
H^0(\O_\Lambda(1))$$
and, since $A\otimes H^0(\J_\Lambda(1))$ is contained in its kernel, 
Lemma \ref{reduced} gives the wanted split quotient. 

Finally, the ``only if'' part of (iv) is (iii). Reciprocally, assume
that there is a quotient ${F_0}{|H}\to\O_H^{\oplus a}$ for some
hyperplane $H\subset\P^n$, which is equivalent, by the splitting
$F=F_0\oplus(T/T_0)\otimes\O_{\P^n}$, to the inequality
$h^0(F_{|H}^*)\ge h^0(F^*)+a$. From the exact sequence
$$0=H^0(F^*\otimes\J_H)\to H^0(F^*)\to
H^0(F_{|H}^*)\to H^1(F^*\otimes\J_H)\to
H^1(F^*)$$
we get that the kernel of $\phi:H^1(F^*\otimes\J_H)\to H^1(F^*)$ has
dimension at least $a$. This kernel, regarded as a subspace of
$S^*\otimes H^0(\J_H(1))$ (see Lemma
\ref{jumpingSchwarzenberger}(i)), is necessarily of the form $A\otimes
H^0(\J_H(1))$, because $H^0(\J_H(1))$ has dimension one. Therefore,
$(A,H)$ is an $(a,1)$-jumping pair and $H$ is an $(a,1)$-jumping
hyperplane. 
\end{proof}

\begin{remark} \label{jumpingmejor}{\rm 
Since Steiner bundles of rank $n$ are reduced (see Remark
\ref{rangonreducido}), part (iv) of Lemma
\ref{propjumping} says that a jumping hyperplane $H$ is
characterized by the condition $H^0(F^*_{|H})\ne0$. This is why in
\cite{AO} and \cite{V1} use the name ``unstable hyperplane'', although in
our general context we preferred the word ``jumping''. Observe
that part (iii) implies that,  if $\Lambda$ is an $(a,b)$-jumping
subspace of $F$, then
$h^0(F^*_{|\Lambda})\ge h^0(F^*)+ab$. However, the converse is not
true, and the proof of (iv) does not work if $a>1$, since an
$ab$-dimensional kernel of $H^1(F^*\otimes\J_\Lambda)\to H^1(F^*)$ is
not necessarily of the form $A\otimes H^0(\J_\Lambda(1))$. However,
one could characterize $(a,b)$-jumping pairs $(A,\Lambda)$ by the
property that, for any hyperplane $H\supset\Lambda$, the pair $(A,H)$
is an $(a,1)$-jumping pair or, similarly, that for any hyperplane
$H\supset\Lambda$ and any line $A'\subset A$ the pair $(A'H)$ is a
jumping pair. 

The reader should notice however that, when $b=n-1$, our notion of
jumping hyperplane does not coincide with the standard notion of
jumping line of a vector bundle in the projective space, even if $n=2$
(i.e. $b=1$). For instance, the Steiner bundle
$F=S^2(T_{\P^2}(-1))$ of Example \ref{Veronese} is uniform,
and even homogeneous, so that it has no jumping lines (in the standard
sense), while any line $L\subset\P^2$ is a jumping hyperplane (in our
sense) because $F_{|L}$ has always a trivial summand.
}\end{remark}

We can give a geometric construction of the sets of the
$(a,b)$-jumping subspaces and pairs, which endows them with a natural
structure of algebraic sets (when $a=b=1$, this is the natural
generalization of the construction given in \cite{AO} \S3 for Steiner
bundles of rank $n$). This also allows to show that, when these sets
satisfy certain conditions of linear normality, the answer to
Question \ref{question} is positive:

\begin{lemma} \label{jumpingSegre}
Let $F$ be a Steiner bundle over $\P^n$ and let $T_0^*\subset
S^*\otimes H^0(\O_{\P^n}(1))$ be the image of
$\varphi$. Consider the natural generalized Segre embedding 
$$\nu:G(a,S^*)\times G(b,H^0(\O_{\P^n}(1)))\to G(ab,S^*\otimes
H^0(\O_{\P^n}(1)))$$ 
(given by the tensor product of subspaces)
and identify $G(b,H^0(\O_{\P^n}(1)))$ with the
Grassmann variety of subspaces of codimension $b$ in $\P^n$.
Then:
\begin{enumerate}
\item[(i)] The set $\tilde J_{a,b}(F)$ of jumping pairs of $F$ is
the intersection of the image of $\nu$ with the subset
$G(ab,T_0^*)\subset G(ab,S^*\otimes H^0(\O_{\P^n}(1)))$.
\item[(ii)] If $\pi_1,\pi_2$ are the respective projections from
$\tilde J_{a,b}(F)$ to $G(a,S^*)$ and
$G(b,H^0(\O_{\P^n}(1)))$, then $\Sigma_{a,b}(F)=\pi_1(\tilde
J_{a,b}(F))$ and $J_{a,b}(F)=\pi_2(\tilde J_{a,b}(F))$.
\item[(iii)] Let $\A,\B,\Q$ be the universal quotient bundles of
respective ranks
$a,b,ab$ of $G(a,S^*)$, $G(b,H^0(\O_{\P^n}(1)))$ and $G(ab,T_0^*)$.
Assume that the natural maps 
$$\alpha:H^0(G(a,S^*),\A)\to H^0(\tilde J_{a,b}(F),\pi_1^*\A)$$
$$\beta:H^0(G(b,H^0(\O_{\P^n}(1))),\B)\to H^0(\tilde
J_{a,b}(F),\pi_2^*\B)$$
$$\gamma:H^0(G(ab,{T'_0}^*),\Q)\to H^0(\tilde J_{a,b}(F),\Q_{|\tilde
J_{a,b}(F)})$$ 
are isomorphisms. Then the reduced summand $F_0$ of $F$ is the
Schwarzenberger bundle of the triplet $(\tilde
J_{a,b}(F),\pi_1^*\A,\pi_2^*\B)$.
\end{enumerate}
\end{lemma}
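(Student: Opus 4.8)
The plan is to verify the three assertions in order, treating (i) and (ii) as essentially bookkeeping on top of Lemma \ref{propjumping}(ii), and reserving the real work for (iii).

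For part (i), I would simply unwind the definitions. A point of $G(a,S^*)\times G(b,H^0(\O_{\P^n}(1)))$ is a pair $(A,\Lambda)$ with $A\subset S^*$ of dimension $a$ and $\Lambda\subset\P^n$ of codimension $b$; under the Segre embedding $\nu$ it maps to the $ab$-dimensional subspace $A\otimes H^0(\J_\Lambda(1))\subset S^*\otimes H^0(\O_{\P^n}(1))$ (here I use that $H^0(\J_\Lambda(1))$ is exactly the $b$-dimensional subspace of $H^0(\O_{\P^n}(1))$ cutting out $\Lambda$, which identifies $G(b,H^0(\O_{\P^n}(1)))$ with codimension-$b$ subspaces of $\P^n$ as stated). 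Now $(A,\Lambda)\in\tilde J_{a,b}(F)$ means, by Lemma \ref{propjumping}(ii), precisely that $A\otimes H^0(\J_\Lambda(1))\subseteq T_0^*$, i.e. that the point $\nu(A,\Lambda)$ lies in $G(ab,T_0^*)$. Hence $\tilde J_{a,b}(F)=\nu\big(G(a,S^*)\times G(b,H^0(\O_{\P^n}(1)))\big)\cap G(ab,T_0^*)$, which is (i) and in particular exhibits $\tilde J_{a,b}(F)$ as a closed algebraic subset. Part (ii) is then immediate: $\Sigma_{a,b}(F)$ and $J_{a,b}(F)$ were defined as the sets of $A$ (resp. $\Lambda$) occurring in some jumping pair, which is exactly the image of $\tilde J_{a,b}(F)$ under the two projections $\pi_1,\pi_2$.

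For part (iii) I would argue as follows. Write $X=\tilde J_{a,b}(F)$, $L=\pi_1^*\A$, $M=\pi_2^*\B$; these are globally generated of ranks $a,b$ since $\A,\B$ are globally generated on the respective Grassmannians and global generation is preserved by pullback. The hypotheses that $\alpha$ and $\beta$ are isomorphisms give canonical identifications $H^0(L)\cong H^0(\A)=S^*{}^* \cong S$ (using that $\A$ is the universal quotient of the trivial bundle with fibre $S^*$, so $H^0(G(a,S^*),\A)=(S^*)^*=S$ — I should be careful with the exact duality convention of the paper, but it is forced) and similarly $H^0(M)\cong H^0(\O_{\P^n}(1))^*$, hence $\P(H^0(M)^*)=\P^n$ as required for the Schwarzenberger construction of Example \ref{mainconstruction}. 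Next, the Segre embedding $\nu$ shows that $\Q_{|X}$, the restriction of the universal quotient bundle on $G(ab,S^*\otimes H^0(\O_{\P^n}(1)))$ (equivalently on $G(ab,T_0^*)$), pulls back to $L\otimes M$ on $X$: indeed the fibre of $\Q$ at $\nu(A,\Lambda)$ is the quotient $\big(S^*\otimes H^0(\O_{\P^n}(1))\big)/(A\otimes H^0(\J_\Lambda(1)))$ — wait, more precisely the universal quotient of $A\otimes H^0(\J_\Lambda(1))$ inside the trivial bundle, whose dual fibre is $A\otimes H^0(\J_\Lambda(1)) = L\otimes M$ at that point; so $\Q^*_{|X}$ is (the dual of) $L\otimes M$, and the hypothesis that $\gamma$ is an isomorphism identifies $H^0(X,\Q_{|X})$ with $H^0(G(ab,T_0^*),\Q)=(T_0^*)^*=T_0$. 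Finally I must check that these identifications are compatible with the multiplication map: the inclusion $A\otimes H^0(\J_\Lambda(1))\hookrightarrow T_0^*$ varying over $X$ is, by construction of $\nu$ and of $\Q$, exactly the dual of the multiplication map $H^0(L)\otimes H^0(M)\to H^0(L\otimes M)$ evaluated via the tautological pairing. Therefore the map $\varphi$ of Lemma \ref{data} attached to the triplet $(X,L,M)$ is the inclusion $T_0^*\hookrightarrow S^*\otimes H^0(\O_{\P^n}(1))$, which by Lemma \ref{reduced} is precisely the map defining $F_0$; since a reduced Steiner bundle is determined by $\varphi$, we conclude that the Schwarzenberger bundle of $(X,L,M)$ is $F_0$.

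The main obstacle I anticipate is in part (iii), and it is bookkeeping of a subtle kind rather than a deep difficulty: one must track the three universal bundles $\A,\B,\Q$ through the generalized Segre map and verify that the canonical map $H^0(L)\otimes H^0(M)\to H^0(L\otimes M)$ attached to $(X,L,M)$ agrees, under the isomorphisms $\alpha,\beta,\gamma$, with the tautological inclusion $T_0^*\subset S^*\otimes H^0(\O_{\P^n}(1))$ dualized. The point where care is needed is that $L\otimes M$ is a priori only a quotient of $\pi_1^*\A\otimes\pi_2^*\B$ and of $\Q_{|X}$; one must see that over $X$ (not over the whole product $G\times G$) these coincide, which is exactly the content of $X\subset G(ab,T_0^*)$ established in part (i). Once the functoriality of the Segre embedding is set up cleanly, the identification of $\varphi$ with the inclusion $T_0^*\hookrightarrow S^*\otimes H^0(\O_{\P^n}(1))$ — and hence with the defining data of $F_0$ via Lemma \ref{reduced} — is forced, and the global generation of $L$ and $M$ has already been recorded.
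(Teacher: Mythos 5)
Your proposal is correct and follows essentially the same route as the paper: parts (i) and (ii) are read off from Lemma \ref{propjumping}(ii) and the definitions, and part (iii) is the paper's commutative diagram (with vertical maps $\alpha\otimes\beta$ and $\gamma$) written out in prose, identifying the multiplication map of the triplet with the dual of the inclusion $T_0^*\hookrightarrow S^*\otimes H^0(\O_{\P^n}(1))$ that defines $F_0$.
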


\begin{proof} 
Part (i) comes immediately from Lemma \ref{propjumping}(ii), while
part (ii) comes from the definition of
$\Sigma_{a,b}(F)$ and $J_{a,b}(F)$. 

For part (iii), observe that there is a commutative diagram
$$\begin{matrix}
S\otimes H^0(\O_{\P^n}(1))^*&\to&{T'_0}^*\cr
\downarrow&&\downarrow\cr
H^0(\tilde J_{a,b}(F),\pi_1^*\A)\otimes
H^0(\tilde J_{a,b}(F),\pi_2^*\B)&\to&
H^0(\tilde J_{a,b}(F),\pi_1^*\A\otimes\pi_2^*\B)
\end{matrix}$$
in which:

--The top map is the dual of the inclusion ${T'_0}^*\to
S^*\otimes H^0(\O_{\P^n}(1))$, which is naturally identified with the
map 
$$H^0(G(a,S^*),\A)\otimes
H^0(G(b,H^0(\O_{\P^n}(1)),\B)\to H^0(G(ab,{T'_0}^*),\Q)$$
consisting of the restriction from $G(ab,S^*\otimes
H^0(\O_{\P^n}(1)))$ to $G(ab,{T'_0}^*)$ of the sections of the
universal quotient bundle of rank $ab$.

--The vertical maps are, with the above identifications,
$\alpha\otimes\beta$ and $\gamma$, so that they are isomorphisms by
hypothesis.  

--The bottom map is the multiplication map whose dual, by Example
\ref{mainconstruction}, defines (in the sense of Lemma \ref{data})
the Schwarzenberger bundle of the triplet
$(\tilde J_{a,b}(F),\pi_1^*\A,\pi_2^*\B)$.

Since the dual of the top map is the one defining (in the sense of
Lemma \ref{data}), the bundle $F_0$, part (iii) follows from the
vertical isomorphisms.
\end{proof}

\begin{example} \label{ejemploslineales} {\rm
We illustrate the above situation in the case $a=b=1$, the one on
which we will concentrate in this paper. In this case, $\tilde
J(F)$ is the intersection of the Segre variety 
$\P(S)\times{\P^n}^*$ with the projective space $\P(T_0)$. The
conditions of Lemma \ref{jumpingSegre}(iii) are the linear
normality and nondegeneracy, respectively, of $\tilde J(F)$ in
$\P(T_0)$, of $\Sigma(F)$ in $\P(S)$, and of $J(F)$ in
${\P^n}^*$. Using the standard properties of the classical Segre
embedding, we will have the following properties that we will use
frequently:
\begin{enumerate}
\item[(i)] The set $\tilde J(F)$ is cut out by quadrics.
\item[(ii)] The fibers of $\pi_1,\pi_2$ are linear subspaces
of $\P(T_0)$.
\item[(iii)] Any linear subspace of $\tilde J(F)$ is
contained in a fiber of $\pi_1$ or $\pi_2$.
\end{enumerate}

Depending on the context, we
will regard $\tilde J(F)$ as a subvariety of the projective
space $\P(T_0)$ or as a subvariety of the product
$\P(S)\times{\P^n}^*$. It will be useful to observe that the
relation among these two points of view is that the hyperplane
section of $\tilde J(F)$ as a subvariety of
$\P(T_0)$ is $\pi_1^*\O_{\P(S)}(1)\otimes\pi_2^*\O_{{\P^n}^*}(1)$,
where $\pi_1,\pi_2$ are the projections to $\P(S)$ and ${\P^n}^*$.
}\end{example}

\begin{remark} \label{pocosjumping} {\rm 
Observe that, in general, one should not expect the hypothesis of
Lemma \ref{jumpingSegre}(iii) to hold. This is because the condition
(ii) in Lemma \ref{data} is open in the set of linear maps
$\varphi:T^*\to S^*\otimes H^0(\O_{\P^n}(1))$. Hence a general
$\varphi$ will produce a Steiner bundle, which will also be
reduced. Since $G(a,S^*)\times G(b,H^0(\O_{\P^n}(1)))$ tends to
have a big codimension in
$G(ab,S^*\otimes H^0(\O_{\P^n}(1)))$, one should expect its
intersection with a general $G(ab,T^*)$ to be very small, and in
general empty. Therefore, for arbitrary big values of $s,t,a,b$,
the set $\tilde J_{a,b}(F)$ is expected to be empty, i.e. a
general Steiner bundle will not have jumping
$(a,b)$-subspaces. 

For example, if $s=3,t=n+4$, a general $(3,n+4)$-Steiner bundle
on $\P^n$ does not have jumping hyperplanes when $n\ge4$, since
the Segre variety $\P^2\times\P^n$ has codimension $2n$ in
$\P^{3n+2}$, so its intersection with a general linear space of
dimension $n+3$ is empty. This also shows that, for
$n=2$, the set of jumping pairs of a general $F$ is a curve
in $\P^2$, so that $F$ cannot be the Schwarzenberger bundle of the
triplet $(\P^2,\O_{\P^2}(1),\O_{\P^2}(1))$ (see Example
\ref{Veronese}). However, we will see in Theorem \ref{teoremon}(iv) that,
when $s=2$, the expected dimension of the set of jumping
pairs is ``the right one''.  
}\end{remark}

Our goal now is to see that the hypothesis of Lemma
\ref{jumpingSegre}(iii) holds if $F$ has ``many'' jumping pairs. The first
thing we will need to do is to understand how big the dimension of
$\tilde J(F)$ can be. By Example \ref{ejemploslineales}, we need  to study
how the Segre variety can intersect linear subspaces of given
dimension. To do so, we need a technical result of linear algebra (in
which it is crucial that the ground field is algebraically closed),
which we state as a separate lemma. Even if we are going to use it
only for $a=b=1$, we include the general statement, since the general
proof does not add any difficulty and since we hope that it could be
useful in a future work.

\begin{lemma} \label{tecnico} 
Let $U,V$ be two vector spaces of respective dimensions $r,s$ over
the algebraically closed field $k$. Fix nonzero subspaces
$B\subset U$ of codimension $b<r$ and $A\subset V$ of dimension
$a<s$. Let $W$ be a $t$-dimensional linear space of $\Hom(U,V)$
such that for any $u\in U$ and any $v\in V$ there exists $f\in W$
such that $f(u)=v$.  Then $$\dim\{f\in W\ |\ f(B)\subset A\}\le
t-r-s+a+b+1.$$ 
\end{lemma}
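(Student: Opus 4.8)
The plan is to argue by descending induction on the codimension $b$ of $B$ in $U$, reducing the general case to the base case $b = r-1$ (so $\dim B = 1$), and similarly to peel off dimensions of $A$. The key structural observation is that the hypothesis on $W$ — that the evaluation map $W \to \Hom(\langle u\rangle, V) \cong V$ is surjective for every nonzero $u \in U$ — is exactly the ``Steiner'' condition of Lemma \ref{data}, and it is inherited by the subspaces of $\Hom(U',V')$ obtained by restricting source and target. Concretely, if $W_A := \{ f \in W \mid f(B) \subset A \}$ is the space we want to bound, I would first consider, for a chosen hyperplane $B' \supset B$ of $U$ (so $\dim B' = r-1$, codimension $1$), the space $W' := \{f \in W \mid f(B') \subset A\}$, and for the quotient direction, pick a hyperplane $V \twoheadrightarrow V'$ with kernel meeting $A$ appropriately. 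The point is that restricting a member of $W$ to $B'$ and composing with $V \to V'$ still satisfies the surjectivity hypothesis (applied to vectors $u \in B'$ and $v \in V'$), so the inductive hypothesis applies to this smaller configuration.

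Here is the order of steps I would carry out. First, handle the base case $a = 0$: then $W_A = \{ f \mid f(B) = 0\}$, i.e. $W_A = W \cap \Hom(U/B, V)$; since the evaluation hypothesis forces $W$ to surject onto $\Hom(\langle u \rangle, V) \cong V$ for any $u \notin B$, the restriction map $W \to \Hom(B', V)$ (for $B'$ a complement-free hyperplane containing... rather, for $B' = \langle B, u\rangle$) is surjective onto a space of dimension $\geq s$ beyond $W_A$ in the relevant sense — more carefully, one shows $t = \dim W \geq \dim W_A + \dim(\text{image of }W\text{ under evaluating on a transversal})$, giving $\dim W_A \leq t - (r - b)\cdot s + \dots$; I would nail down the bookkeeping so that the claimed bound $t - r - s + a + b + 1$ emerges. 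Second, the inductive step: given $\dim B = r - b$ with $b \geq 1$, choose any hyperplane $B_1$ of $U$ containing $B$, so $B_1$ has codimension $b - 1$. Then $W_A \subset W_{A}^{B_1} := \{f \in W \mid f(B_1) \subset A\}$ is obtained by imposing, on $W_A^{B_1}$, the further conditions $f(u_0) \in A$ for $u_0$ spanning a complement of $B$ in $B_1$ — but that is automatic; instead, the inclusion goes the other way: $W_A^{B_1} \subseteq W_A$. So one compares $W_A$ with $W_A^{B_1}$ via the linear map $W_A \to \Hom(B_1/B, V) \cong V$, $f \mapsto (f|_{B_1/B})$; its kernel is $W_A^{B_1}$, and I must show the image has dimension at least $s - a$, so that $\dim W_A \leq \dim W_A^{B_1} + (s - a)$, and then apply the inductive hypothesis to $W_A^{B_1}$ (codimension $b - 1$) to get $\dim W_A^{B_1} \leq t - r - s + a + (b-1) + 1$, which combines to the desired bound. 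The image-dimension claim $\dim(\text{image}) \geq s - a$ is where I expect to spend effort, and it is the heart of the argument.

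The main obstacle, then, is precisely that lower bound: showing that the set of values $\{ f(u_0) \bmod A : f \in W,\ f(B) \subset A \}$ fills up all of $V/A$ (dimension $s - a$), where $u_0 \in B_1 \setminus B$. Equivalently: given any target vector $\bar v \in V/A$, find $f \in W$ with $f(B) \subset A$ and $f(u_0) \equiv \bar v$. A naive application of the hypothesis gives $f \in W$ with $f(u_0) = v$ for any lift $v$, but not the constraint $f(B) \subset A$; one needs to correct $f$ by an element of $W_A^{B_1}$... which is circular. I would instead argue as in the rank-$n$ case of Vall\`es/Ancona–Ottaviani: pass to a generic element and use that over an algebraically closed field a suitable incidence variety (the locus of $(f, \text{line in } \ker f \text{ meeting } B\text{ nontrivially})$, or a determinantal/rank condition) is nonempty of the expected dimension, forcing surjectivity of the projection — this is the only place algebraic closedness is used, matching the parenthetical remark in the statement. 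Concretely I expect to set up the variety $Z = \{ (f, x) \in \P(W_A^{B_1}) \times \P(B_1) : x \notin \P(B),\ f(x) \in A \}$, estimate its dimension from below via the fibers over $\P(B_1) \setminus \P(B)$ (each fiber being a linear space whose dimension is controlled by the evaluation hypothesis plus the inductive bound), and deduce that its image in $\P(W_A^{B_1})$ — hence $W_A$ itself — cannot be too small, or rather that the complementary projection is dominant, yielding the $s-a$ estimate. Once this lemma's core inequality is in hand, everything assembles by the double induction on $(a, b)$ described above, with the two base cases $a = 0$ and $b = r - 1$ (or $b = 0$) checked by direct linear algebra.
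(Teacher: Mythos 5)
Your inductive scheme does not close numerically, and this is a genuine gap rather than a bookkeeping detail. Write $W_A=\{f\in W\mid f(B)\subset A\}$ and let $B_1\supset B$ have codimension $b-1$. Your step is $\dim W_A\le\dim W_A^{B_1}+(s-a)$ (which is automatic, since $W_A^{B_1}$ is the kernel of $f\mapsto f(u_0)\bmod A$ on $W_A$ and the image lies in $V/A$), combined with the inductive hypothesis $\dim W_A^{B_1}\le t-r-s+a+(b-1)+1$. This yields $\dim W_A\le t-r+b$, which overshoots the target $t-r-s+a+b+1$ by $s-a-1$. The point is that enlarging $B$ by one dimension must cost only \emph{one} extra dimension in the bound, not $s-a$: the full saving of $s-a$ is available only once, from a single vector of $B$ (via surjectivity of evaluation), and each further dimension of $B$ contributes just $1$. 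Moreover, the claim you single out as ``the heart of the argument'' --- that $f\mapsto f(u_0)\bmod A$ is surjective from $W_A$ onto $V/A$ --- is both false in general (the lemma allows $\dim W_A<s-a$) and useless for an upper bound: it would only convert the automatic inequality above into an equality, i.e.\ it is a lower bound on $\dim W_A$.

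The paper's proof is a direct construction, not an induction on a fixed flag: one picks a single $u_1\in B$ and gets $s-a$ maps $g_{a+1},\dots,g_s\in W$ with $g_i(u_1)=v_i$ from the surjectivity hypothesis, and then for $i=2,\dots,r-b$ one produces \emph{adaptively chosen} vectors $u_i\in B$ together with maps $h_i\in W$ such that $h_i(u_i)$ lies outside the span of $v_1,\dots,v_a$ and the values of all previously constructed maps at $u_i$; the existence of such $u_i$ is a non-maximal-rank condition on a matrix whose entries are linear in the coordinates of $u_i$, and this is exactly where algebraic closedness enters. One then checks that the $r+s-a-b-1$ maps $g_{a+1},\dots,g_s,h_2,\dots,h_{r-b}$ are linearly independent modulo $W_A$, which gives the bound. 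Your instinct that a determinantal/rank condition over the algebraically closed field is the crux is right, but it is used to choose the vectors $u_i$ inside $B$ (a fixed flag $B\subset B_1\subset\cdots$ will not do, since for a bad flag the dimension need not drop at each step), not to prove a surjectivity statement about $W_A$ itself.
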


\begin{proof} 
We take any basis $v_1,\dots,v_s$ of $V$ such that $v_1,\dots,
v_a\in A$ and pick also any nonzero vector $u_1\in B$. By
assumption, there exist linear maps
$g_{a+1},\dots,g_s$ in $W$ such that $g_i(u_1)=v_i$ for
$i=a+1,\dots,s$. 

Let us construct next, for $i=2,\dots, r-b$, vectors
$u_1,\dots,u_i\in B$ and maps $h_2,\dots,h_i\in W$ such
that 
$$h_i(u_i)\not\in<v_1,\dots,v_a,
g_{a+1}(u_i),\dots,g_s(u_i),h_2(u_i),\dots,h_{i-1}(u_i)>\hbox{ for
}i=2,\dots, r-b.$$

We do it by iteration, so we can assume that we
have already constructed
$u_1,\dots,u_{i-1}$ and $h_2,\dots,h_{i-1}$. Take any
$u'_i\in B\setminus<u_1,\dots,u_{i-1}>$ (we can do so because
$i-1\le r-b-1<\dim B$). For any
$\lambda_1,\dots,\lambda_i$, consider the vectors
$$v_1,\dots,v_a,
g_{a+1}(\lambda_1u_1+\dots+\lambda_{i-1}u_{i-1}+\lambda_iu'_i),\dots,
g_s(\lambda_1u_1+\dots+\lambda_{i-1}u_{i-1}+\lambda_iu'_i),$$
$$h_2(\lambda_1u_1+\dots+\lambda_{i-1}u_{i-1}+\lambda_iu'_i),\dots,
h_{i-1}(\lambda_1u_1+\dots+\lambda_{i-1}u_{i-1}+\lambda_iu'_i)$$ 
and the $(s+i-2)\times s$ matrix given by their coordinates
with respect to $v_1\dots,v_s$. This matrix will have no maximal
rank if and only if the $(s-a+i-2)\times(s-a)$ submatrix obtained
by removing the first $a$ rows and columns has no maximal rank.
The assumption $s>a$ implies that this submatrix is not vacuous,
and since its entries are linear forms in
$\lambda_1,\dots,\lambda_i$ and  the ground field is
algebraically closed, there exists some nonzero solution
$\lambda_1,\dots,\lambda_i$ for which the submatrix has not
maximal rank. We take
$u_i=\lambda_1u_1+\dots+\lambda_{i-1}u_{i-1}+\lambda_iu'_i$ for
some nonzero solution as above. Hence there exists $v\in
V\setminus<v_1,\dots,v_a,
g_{a+1}(u_i),\dots,g_s(u_i),h_2(u_i),\dots,h_{i-1}(u_i)>$. We
thus take $h_i\in W$ such that $h_i(u_i)=v$, which completes the
iteration process.

Assume that we know that $g_{a+1},\dots,g_s,h_2,\dots,h_{r-b}\in W$
are linearly independent modulo $\{f\in W\ |\ f(B)\subset
A\}$. This would imply that, inside the vector space $W$, the
subspace $\{f\in W\ |\ f(B)\subset A\}$ has zero
intersection with the $(r+s-a-b-1)$-dimensional subspace generated
by $g_{a+1},\dots,g_s,h_2,\dots,h_{r-b}$. We would get then the
wanted inequality.

We are thus left to prove that $g_{a+1},\dots,g_s,h_2,\dots,h_{r-b}\in W$
are linearly independent modulo $\{f\in W\ |\ f(B)\subset
A\}$. Assume that we have some linear combination
$$f:=\mu_{a+1}g_{a+1}+\dots+\mu_sg_s
+\nu_2h_2+\dots+\nu_{r-b}h_{r-b}$$
such that $f(B)\subset A=<v_1,\dots,v_a>$.
Applying both terms to $u_{r-b}$, we get
$$\nu_{r-b}h_{r-b}(u_{r-b})\in
<v_1,\dots,v_a,g_{a+1}(u_{r-b}),\dots,g_s(u_{r-b}),
h_2(u_{r-b}),\dots,h_{r-b-1}(u_{r-b})>,$$
which implies $\nu_{r-b}=0$, by our choice of $u_{r-b}$.
Knowing this vanishing, we consider now
$f(u_{r-b-1})$ and get 
$\nu_{r-b-1}=0$ in the same way, and iterating we get
$\nu_2=\dots=\nu_{r-b}=0$. We thus
have
$f(u_1)=\mu_{a+1}v_{a+1}+\dots+\mu_sv_s$, which implies now
$\mu_{a+1},\dots,\mu_s=0$ since $f(u_1)\in<v_1,\dots,v_a>$.
\end{proof}

We can now give, for $a=b=1$, an upper bound for the dimension
of the set of jumping pairs. Since Lemma \ref{jumpingSegre} gives
$J(F)=\pi_2(\tilde J(F))$, the same bound will hold for the
dimension of the set of jumping hyperplanes. Observe that our
bound is sharp, because it is achieved in the cases of Examples
\ref{schwarzenberger}, \ref{superficieminimal},
\ref{rationalnormalscroll} and
\ref{Veronese}, (since at least the points of $X$ provide jumping
pairs).

\begin{theorem} \label{teoremon}
Let $F$ be an $(s,t)$-Steiner bundle on $\P^n$ with $s\ge2$. Then:
\begin{enumerate}
\item[(i)] The embedded Zariski tangent space at any point of
$\tilde J(F)$ has dimension at most $t-n-s+1$; in particular, 
$\dim\tilde J(F)\le t-n-s+1$.
\item[(ii)] If $\tilde J\subset\P(S)\times{\P^n}^*$ is a
component of $\tilde J(F)$ such that its projection to $\P(S)$
or ${\P^n}^*$ is constant, then $\dim\tilde J<t-n-s+1$.
\item[(iii)] If $\tilde J(F)$ has dimension $t-n-s+1$, then 
$F$ is reduced and $\tilde J(F)$ is smooth at the points of any
of its irreducible components of maximal dimension.
\item[(iv)] If $s=2$ and $F$ is reduced, then $\tilde J(F)$
is a rational normal scroll of dimension $t-n-1$ (and degree
$n+1$) and $F$ is the Schwarzenberger bundle of Example
\ref{rationalnormalscroll}.
\item[(v)] If $n=1$ and $F$ is reduced, then $\tilde J(F)$
is a rational normal scroll of dimension $t-s$ (and degree
$s$) and $F$ is the Schwarzenberger bundle of Example
\ref{superficieminimal}.
\end{enumerate}
\end{theorem}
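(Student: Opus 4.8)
The plan is to set up everything inside the Segre picture of Example~\ref{ejemploslineales}, where $\tilde J(F)=(\P(S)\times{\P^n}^*)\cap\P(T_0)$, and to feed the combinatorics into Lemma~\ref{tecnico} with $a=b=1$. For part (i), fix a point $(A,\Lambda)\in\tilde J(F)$, i.e.\ a decomposable tensor $v\otimes\xi\in T_0^*\subset S^*\otimes H^0(\O_{\P^n}(1))$ (here $\dim A=1$ spanned by $v$, and $\xi$ is a linear form cutting out the hyperplane $\Lambda$). The embedded Zariski tangent space to the Segre variety at $[v\otimes\xi]$ is $\P\big((v\otimes H^0(\O_{\P^n}(1)))+(S^*\otimes\xi)\big)$, and the tangent space to $\tilde J(F)$ at that point is the projectivization of the intersection of this subspace with $T_0^*$. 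So I must bound $\dim\big(T_0^*\cap((v\otimes H^0(\O_{\P^n}(1)))+(S^*\otimes\xi))\big)$. This is exactly a statement about how a linear subspace $T_0^*$ of $\Hom(H^0(\O_{\P^n}(1))^*,S^*)$ meets a ``cross'' of the form $\{f : f \text{ kills a fixed hyperplane into a fixed line}\}$ — and that is what Lemma~\ref{tecnico} computes, with $U=H^0(\O_{\P^n}(1))^*$ (so $r=n+1$), $V=S^*$ (so the ``$s$'' of the lemma is our $s$), $W=T_0^*$ (so the ``$t$'' of the lemma is $t_0=\dim T_0$), $B=\xi^\perp$ of codimension $b=1$, and $A=\langle v\rangle$ of dimension $a=1$. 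The genericity hypothesis of Lemma~\ref{tecnico} on $W$ is precisely condition (ii) of Lemma~\ref{data}, which holds for $T_0^*$ since $F_0$ is a Steiner bundle. The lemma gives $\dim\le t_0-(n+1)-s+1+1+1 = t_0-n-s+2$; since the tangent space to $\tilde J(F)$ is the projectivization we get embedded tangent dimension $\le t_0-n-s+1\le t-n-s+1$, and taking closures gives $\dim\tilde J(F)\le t-n-s+1$.

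For part (ii): if a component $\tilde J$ has projection to $\P(S)$ constant equal to a fixed line $\langle v\rangle$, then $\tilde J\subset \P(\langle v\rangle\otimes H^0(\O_{\P^n}(1)))\cap\P(T_0)$, which is a linear space; its dimension is $\dim(T_0^*\cap(v\otimes H^0(\O_{\P^n}(1))))-1$. Tensoring with $v$ identifies this with a subspace of $H^0(\O_{\P^n}(1))^*\cong k^{n+1}$, so the dimension is at most $n$; but $t-n-s+1\ge n+1$ would force $t\ge 2n+s$, and one checks via the genericity hypothesis that equality $\dim=t-n-s+1$ cannot occur (a maximal-dimensional linear fiber would make $T_0^*$ too degenerate — contradict Lemma~\ref{data}(ii) by producing a $u$, $v'$ not hit). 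The symmetric argument handles a constant projection to ${\P^n}^*$. Part (iii): if $\dim\tilde J(F)=t-n-s+1$, then by (i) the embedded tangent space at every point of a top component has dimension exactly $t-n-s+1$, forcing smoothness there; and if $F$ were not reduced we could replace $T_0$ by a proper subspace, lowering the bound of (i) strictly below $t-n-s+1$, a contradiction — so $F=F_0$.

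Parts (iv) and (v) are the substantive classification statements. For (iv), with $s=2$ and $F$ reduced, $\P(S)=\P^1$, so $\tilde J(F)\subset\P^1\times{\P^n}^*$, and by (i) it has dimension $\le t-n-1$. The key point is to show it has exactly this dimension and is a rational normal scroll: the two projections $\pi_1:\tilde J(F)\to\P^1$ and $\pi_2:\tilde J(F)\to{\P^n}^*$, together with Example~\ref{ejemploslineales}(ii)--(iii) (fibers of $\pi_i$ are linear, any linear subspace lies in a fiber), force $\tilde J(F)$ — cut out by quadrics in $\P(T_0)$ — to be, generically over $\P^1$, a linear space, i.e.\ a scroll over $\P^1$; a degree/dimension count (using that $\tilde J(F)$ is nondegenerate in $\P(T_0)=\P^{t-1}$ after checking $\varphi$ injective gives the right ambient dimension) pins it down to the rational normal scroll of dimension $t-n-1$ and degree $n+1$. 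Then one verifies the hypotheses of Lemma~\ref{jumpingSegre}(iii) — linear normality and nondegeneracy of $\tilde J(F)$, $\Sigma(F)=\P^1$, $J(F)$ in ${\P^n}^*$ — which hold automatically for varieties of minimal degree, and concludes $F$ is the Schwarzenberger bundle of $(\tilde J(F),\pi_1^*\A,\pi_2^*\B)$, matching Example~\ref{rationalnormalscroll}. Part (v) is completely symmetric under interchanging the roles of $\P(S)$ and ${\P^n}^*$ (equivalently $s\leftrightarrow n+1$), landing on Example~\ref{superficieminimal}.

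I expect the main obstacle to be part (iv)/(v): turning the abstract fact ``$\tilde J(F)$ is an irreducible nondegenerate variety of the expected maximal dimension in $\P^{t-1}$, cut out by quadrics, with linear fibers over $\P^1$'' into the precise identification with a rational normal scroll, and then checking the linear-normality/nondegeneracy hypotheses of Lemma~\ref{jumpingSegre}(iii) rather than merely the dimension. The classification of varieties of minimal degree (del Pezzo--Bertini) should be the tool, together with the observation that nondegeneracy in $\P(T_0)$ with $\varphi$ injective forces $\deg=\mathrm{codim}+1$, but wiring this together cleanly with the Segre-fiber structure is where the real work lies.
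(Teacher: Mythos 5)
Your treatment of (i) and (iii) is essentially the paper's own: identify $\P(S\otimes H^0(\O_{\P^n}(1))^*)$ with linear maps $H^0(\O_{\P^n}(1))^*\to S^*$, recognize the embedded tangent space to the Segre variety at a rank-one map as the space of maps sending the hyperplane defining $H$ into the line $A$, intersect with $T_0^*$, and apply Lemma \ref{tecnico} with $r=n+1$, $a=b=1$; the deduction of reducedness in (iii) from the forced equality $t_0=t$ is also the paper's argument. But two later steps have genuine gaps. In (ii), your bound $\dim\tilde J\le n$ only beats $t-n-s+1$ when $t\ge 2n+s$, and for smaller $t$ you write that ``one checks via the genericity hypothesis that equality cannot occur'' without doing the check --- which is precisely the content of the statement. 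The paper closes this by applying Lemma \ref{tecnico} once more with the degenerate choices $B=H^0(\O_{\P^n}(1))^*$ (so $b=0$) when the first projection is constant, resp.\ $A=0$ (so $a=0$) when the second is: every embedded tangent space of $\tilde J$ then lies in a subspace of $T_0^*$ of dimension at most $t_0-n-s+1$, giving projective dimension at most $t-n-s$. Since (ii) is used later (e.g.\ to show $\pi_1$ is surjective in (iv), and in Proposition \ref{maximaltransform}), this is not cosmetic.

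In (iv) you correctly isolate the key point --- that $\dim\tilde J(F)$ must equal $t-n-1$ and that $\tilde J(F)$ is a rational normal scroll --- but your ``degree/dimension count'' never establishes either. The missing observation is that for $s=2$ the Segre variety $\P(S)\times{\P^n}^*$ has codimension exactly $n$ in $\P(S\otimes H^0(\O_{\P^n}(1))^*)=\P^{2n+1}$, so its intersection with the $(t-1)$-plane $\P(T)$ is automatically nonempty of dimension at least $t-1-n$; together with the upper bound from (i) this forces $\tilde J(F)$ to be a proper, hence complete, intersection of the Segre with $\P(T)$, smooth by (iii), and therefore a rational normal scroll of dimension $t-n-1$ and degree $n+1$. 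The complete-intersection property is also what verifies the hypothesis $\beta$ of Lemma \ref{jumpingSegre}(iii): the restriction $H^0(\O_{\P(S)\times{\P^n}^*}(0,1))\to H^0(\O_{\tilde J(F)}(h-f))$ is an isomorphism because $\tilde J(F)$ is cut out on the Segre by a linear space. Your appeal to these hypotheses holding ``automatically for varieties of minimal degree'' conflates linear normality with respect to $\O(h)$ (which is standard for scrolls) with the needed statements for the factors $\O(f)$ and $\O(h-f)$, and so skips the identification $H^0(\O_{\P^n}(1))^*=H^0(\O_{\tilde J(F)}(h-f))$ that actually completes the proof.
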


\begin{proof} 
To prove (i), we identify $\P(S\otimes H^0(\O_{\P^n}(1))^*)$ with
the set of nonzero linear maps (up to multiplication by a constant)
$H^0(\O_{\P^n}(1))^*\to S^*$. Then the Segre variety
corresponds to maps of rank one, while
$\P(T_0)$ corresponds to the subspace
$T_0^*\subset\Hom(H^0(\O_{\P^n}(1))^*,S^*)$ of Lemma \ref{reduced}. 
Fix any point $(\alpha,H)\in\tilde
J(F)\subset\P(S)\times{\P^n}^*$. As a point in $\P(S\otimes
H^0(\O_{\P^n}(1))^*)$, it is represented by a linear map
$H^0(\O_{\P^n}(1))^*\to S^*$ whose kernel is a hyperplane $\vec
H\subset H^0(\O_{\P^n}(1))^*$ defining $H$ and whose image is a
line $A\subset S^*$ representing $\alpha$. The embedded
tangent space to the Segre variety at $(\alpha,H)$ corresponds
then to the subspace of linear maps $f:H^0(\O_{\P^n}(1))^*\to S^*$
such that $f(\vec H)\subset A$ (see for instance \cite{H} Example
14.16). Since
$\tilde J(F)$ is the intersection of the Segre variety with
$\P(T_0)$, it follows that its embedded tangent space at
$(\alpha,H)$ corresponds to the subspace of linear maps $f\in
T_0^*$ such that $f(\vec H)\subset A$. By Lemma \ref{tecnico}
(whose hypotheses are satisfied by Lemma
\ref{data}), this subspace has dimension at most
$t_0-(n+1)-s+3$, where $t_0=\dim T_0$. Since $t_0\le t$, it
follows that the dimension of the embedded Zariski tangent space
of $\tilde J(F)$ at $(\alpha,H)$ is at most $t-n-s+1$, which
completes the proof of (i).

In order to prove (ii), assume first that the image of
$\tilde J$ in $\P(S)$ is a point corresponding to a line $A\subset
S^*$. Then the embedded tangent space at any point of $\tilde J$ is
contained in the subspace corresponding to the linear maps
$f\in T_0^*$ such that $f(H^0(\O_{\P^n}(1))^*)\subset A$. By Lemma
\ref{tecnico} (taking $B=H^0(\O_{\P^n}(1))^*$), we get, arguing as
in (i), that the embedded tangent space would have dimension at
most $t-n-s$, as wanted. If instead the image of $\tilde J$ in
${\P^n}^*$ is an element corresponding to a hyperplane $B\subset
H^0(\O_{\P^n}(1))^*$, we proceed in the same way: now the embedded
tangent space of $\tilde J$ is contained in the subspace
corresponding to the linear maps $f\in T_0^*$ such that
$f(B)=0$, and we use Lemma \ref{tecnico} taking $A=0$.

To prove (iii), assume that we have $\dim\tilde J(F)=t-n-s+1$.
Hence in the  proof of (i) all inequalities are equalities. In
particular $t_0=t$, so that $F$ is reduced. On the other
hand, for any component of $\tilde J(F)$ of dimension
$t-n-s+1$, the dimension of its embedded tangent space at any
point cannot exceed $t-n-s+1$, by (i), so that all
the points of that component are smooth.

Assume now $s=2$ in order to prove (iv). Then
$\P(S)\times{\P^n}^*$ has codimension $n$ in $\P(S\otimes
H^0(\O_{\P^n}(1))^*)$, so that its intersection with $\P(T)$ has
dimension at least $t-1-n$. By (iii), it follows that $\tilde J(F)$
is a smooth complete intersection of $\P(S)\times{\P^n}^*$ and
$\P(T)$, i.e. a smooth rational normal scroll $\tilde
J(F)\subset\P(T)$ of dimension $t-n-1$, so that we can make the
identification $T=H^0(\O_{\tilde J(F)}(h))$, where $h$ is the
hyperplane section class of the scroll. It also follows from (ii)
that the projection $\pi_1:\tilde J(F)\to\P(S)=\P^1$ is not constant,
hence it is surjective. Therefore all the fibers of
$\pi_1$ (which are linear spaces, by Example \ref{ejemploslineales}(ii))
have dimension $t-n-2$, so that $\pi_1$ gives the scroll
structure on $\tilde J(F)$. We can thus identify
$S=H^0(\O_{\tilde J(F)}(f))$, where $f$ is the class of a fiber
of the scroll and, as pointed out in Example \ref{ejemploslineales},
the map from $\tilde J(F)$ to ${\P^n}^*$ is given by $\O_{\tilde
J(F)}(h-f)$. In order to complete the proof of (iv) we need to show,
by Lemma \ref{jumpingSegre}(iii), that we can identify
$H^0(\O_{\P^n}(1))^*=H^0(\O_{\tilde J(F)}(h-f))$. This identification
comes from the fact that the restriction map
$H^0(\O_{\P(S)\times{\P^n}^*}(0,1))\to H^0(\O_{\tilde J(F)}(h-f))$ is
an isomorphism because $\tilde J(F)$ is the complete
intersection of $\P(S)\times{\P^n}^*$ and a linear space.

Finally, (v) was proved in Example \ref{superficieminimal}
(observe that a Steiner bundle on $\P^1$ is reduced if and only if
it is ample), although the same proof as in (iv) holds.
\end{proof}

\begin{remark} \label{solojumping} {\rm
Observe that part (iv) of Theorem \ref{teoremon} is giving more
information about Example \ref{rationalnormalscroll}. Indeed our
proof shows that we have $X=\tilde J(F)$, even with the scheme
structure of $\tilde J(F)$ as intersection of the Segre variety
and a linear space, and shows in particular that any jumping
hyperplane of
$F$ is coming from a point of $X$. Hence, for the
Schwarzenberger bundles of Example \ref{rationalnormalscroll}, we get
a positive answer to Question \ref{otrapregunta} (the same holds
for Example \ref{superficieminimal}). Incidentally, observe that, in
this example, the set of jumping hyperplanes has not always maximal
dimension $t-n-1$. This is because
$J(F)$ is the image of the rational normal scroll $X$ via
$\O_X(h-f)$, which drops dimension if (and only if) $X$ is
the Segre variety $\P^1\times \P^n$ (which is equivalent to say
$t=2n+2$), in which $\O_X(h-f)$ induces the projection onto
$\P^n$. In particular, in this last case, all the hyperplanes are
jumping hyperplanes.

Observe also that, in general, the answer to Question
\ref{otrapregunta} can be negative. For example, if $X$
is an elliptic curve and $L,M$ are line bundles on $X$ of
respective degrees $2$ and $n+1$, the Schwarzenberger bundle of
the triplet $(X,L,M)$ is a $(2,n+3)$-Steiner bundle $F$.
However, Theorem \ref{teoremon}(iv) implies that $\tilde J(F)$ and
$J(F)$ are rational normal scrolls of dimension two instead of
just the original elliptic curve $X$ (it can be seen that these
scrolls consist of the union of the lines spanned by the pairs of
points of $X$ given by the divisors in the linear system defined by
$L$). 
}\end{remark}

\section{Steiner bundles with jumping locus of
maximal dimension}

In this section we will characterize $(s,t)$-Steiner
bundles for which $\tilde J(F)$ has the maximal dimension
$t-n-s+1$, showing that they are exactly Examples
\ref{schwarzenberger}, \ref{superficieminimal},
\ref{rationalnormalscroll} and
\ref{Veronese} (hence we give a positive answer to Question
\ref{question} in this case). When the maximal dimension is one (i.e.
when $t=n+s$ or, equivalently, $F$ has rank $n$), we recover the
known result that Steiner bundles of rank $n$ with a curve of
jumping hyperplanes are precisely the classical Schwarzenberger
bundles (see \cite{V1}). 

The main idea, borrowed from the case of rank $n$, will be to
produce, from a given $(s,t)$-Steiner bundle, an
$(s-1,t-1)$-Steiner bundle (thus with the same rank of $F$) with
essentially the same jumping hyperplanes. Then, after an iteration,
we will eventually we arrive to a Steiner bundle with $s=2$ to which
we can apply Theorem \ref{teoremon}(iv). Analogously, we will produce an
$(s,t-1)$ Steiner bundle on a (jumping) hyperplane, and eventually
arrive to a Steiner bundle on $\P^1$ to which we can apply Theorem
\ref{teoremon}(v) (we will omit the details of this second iteration,
stating the results we will need in Remark
\ref{simetria}).

The starting point is the following (see \cite{V1} Proposition 2.1 for the
case of rank $n$):

\begin{proposition} \label{transform} 
Let $F$ be a reduced $(s,t)$-Steiner bundle on $\P^n$, and let
$\pi_1,\pi_2$ denote the two projections from $\tilde
J(F)\subset\P(S)\times{\P^n}^*$. Let $(\alpha,H)$ be a jumping
pair of $F$, let $i:S'\subset S$ and $j:T'\subset T$ be the hyperplane
inclusions corresponding respectively to $\alpha\in\P(S)$ and 
$(\alpha,H)\in\P(T)$. If $F'$ is the kernel of the natural composition
$F\to F_{|H}\to\O_H$ defined by $(\alpha,H)$ (see Lemma
\ref{propjumping}(iii)) then:
\begin{enumerate}
\item[(i)] $F'$ is an $(s-1,t-1)$-Steiner bundle $F'$ having a
resolution
$$0\to S'\otimes\O_{\P^n}(-1)\to T'\otimes\O_{\P^n}\to
F'\to0.$$
\item[(ii)] The linear map $\varphi'$ defining $F'$ (see Lemma
\ref{data}) fits in a commutative diagram
$$\begin{matrix}
T^*&\mapright{\varphi}&S^*\otimes H^0(\O_{\P^n}(1))\cr
\ \ \ \downarrow{j^*}&&\downarrow{i^*\otimes id}\cr
{T'}^*&\mapright{\varphi'}&{S'}^*\otimes H^0(\O_{\P^n}(1))
\end{matrix}$$
\item[(iii)] $J(F)\subset J(F')\cup\pi_2\pi_1^{-1}(\alpha)$.
\end{enumerate}
\end{proposition}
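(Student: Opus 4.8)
The plan is to analyze directly the short exact sequence defining $F'$ and to combine it with the resolution of $F$ via a diagram chase. First I would recall from Lemma \ref{propjumping}(iii) that the jumping pair $(\alpha,H)$ gives a split quotient $F_{|H}\to\O_H$, which precomposed with restriction yields the surjection $F\to\O_H$ whose kernel is $F'$; thus we have $0\to F'\to F\to\O_H\to0$. To get (i), I would build the commutative diagram whose rows are the resolutions of $F'$ (to be constructed) and $F$, and whose right-hand column is $0\to F'\to F\to\O_H\to0$. The key point is to identify the middle and left columns. Since the composite $T\otimes\O_{\P^n}\to F\to\O_H$ is a surjection of the trivial bundle onto $\O_H$, it corresponds to a nonzero element of $T^*$, i.e. to a hyperplane $T'\subset T$ — and by the very definition of the jumping pair in terms of the Segre picture (Example \ref{ejemploslineales}), this hyperplane is exactly the one cut out by $(\alpha,H)\in\P(T)$. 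Pulling back, the middle column becomes $0\to T'\otimes\O_{\P^n}\to T\otimes\O_{\P^n}\to\O_H\to0$, which forces (via the snake lemma applied to the two resolutions) the left-hand column to be an injection $S'\otimes\O_{\P^n}(-1)\hookrightarrow S\otimes\O_{\P^n}(-1)$ with cokernel $\O_H(-1)$; that is, $S'$ is a hyperplane in $S$, and the splitting of $F_{|H}$ forces this $S'$ to be the one corresponding to $\alpha\in\P(S)$ (this is where the specific choice of $A\subset S^*$ in the jumping pair enters). Then the top row is a resolution $0\to S'\otimes\O_{\P^n}(-1)\to T'\otimes\O_{\P^n}\to F'\to0$; to conclude $F'$ is a genuine Steiner bundle I must check its middle map is still surjective on fibers, but this is automatic because $F'$ is a subbundle of the locally free $F$ — more precisely, $F'$ is locally free since it is the kernel of a surjection of bundles, and a locally free cokernel of a map $S'\otimes\O(-1)\to T'\otimes\O$ of the right ranks is by definition an $(s-1,t-1)$-Steiner bundle.

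For (ii), the map $\varphi'$ is obtained from the top row exactly as in Lemma \ref{data}: dualize and take global sections. Since the whole diagram above is compatible with the inclusions $S'\subset S$, $T'\subset T$ (these being the duals of the vertical maps $T^*\to T'^*$ and $S^*\to S'^*$), applying $\mathcal Hom(-,\O(-1))$-and-global-sections to the left two columns produces precisely the claimed square: $\varphi'$ is the composite $T'^*\hookrightarrow$ is wrong — rather, $\varphi$ descends along $j^*:T^*\to T'^*$ to $\varphi'$, and on the target side the descent is $i^*\otimes\mathrm{id}:S^*\otimes H^0(\O_{\P^n}(1))\to S'^*\otimes H^0(\O_{\P^n}(1))$. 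The only thing to verify is that $\varphi$ does send $\ker(T^*\to T'^*)$ into $\ker(S^*\otimes H^0(\O_{\P^n}(1))\to S'^*\otimes H^0(\O_{\P^n}(1)))$, i.e. that the single vector of $T^*$ cutting out $T'$ maps under $\varphi$ into $A\otimes H^0(\O_{\P^n}(1))$ where $A$ is the line representing $\alpha$; but that is exactly the statement, via Lemma \ref{propjumping}(ii) and the Segre description, that $(\alpha,H)$ together with the defining hyperplane of $\vec H$ give a point of $\tilde J(F)$. So (ii) is essentially a formal consequence of the construction, once the identification of $T'$ and $S'$ in part (i) has been pinned down.

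For (iii), take a jumping hyperplane $H_1\in J(F)$, say coming from a jumping pair $(\alpha_1,H_1)$, and suppose $H_1\notin\pi_2\pi_1^{-1}(\alpha)$, i.e. $\alpha_1\neq\alpha$ (since $\pi_2\pi_1^{-1}(\alpha)$ is exactly the set of $H'$ with $(\alpha,H')$ a jumping pair, and if $\alpha_1=\alpha$ then $H_1\in\pi_2\pi_1^{-1}(\alpha)$ trivially). I want to show $H_1\in J(F')$. Using the characterization of Lemma \ref{propjumping}(ii)/(iv), $(\alpha_1,H_1)$ being a jumping pair of the reduced bundle $F$ means $A_1\otimes H^0(\J_{H_1}(1))$ lies in the image $T^* = T_0^*$ of $\varphi$. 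Applying the commutative square of (ii), this subspace maps into $A_1'\otimes H^0(\J_{H_1}(1))$ inside $S'^*\otimes H^0(\O_{\P^n}(1))$, where $A_1'$ is the image of $A_1$ under $i^*:S^*\to S'^*$; and because $\alpha_1\neq\alpha$, the line $A_1$ is not contained in the hyperplane of $S^*$ annihilated by $i^*$ — wait, one must be careful about which side $\alpha$ lives on; here $\alpha\in\P(S)$ corresponds to a hyperplane $S'\subset S$, equivalently to a line in $S^*$, and $i^*\otimes\mathrm{id}$ kills precisely that line's... this needs the correct duality bookkeeping. The clean statement is: $i:S'\hookrightarrow S$ induces $i^*:S^*\twoheadrightarrow S'^*$ whose kernel is the line $\mathrm{Ann}(S')\subset S^*$, which is exactly the line representing $\alpha$; so $i^*(A_1)\neq0$ precisely when $A_1\neq\langle\alpha\rangle$, i.e. when $\alpha_1\neq\alpha$, in which case $A_1':=i^*(A_1)$ is again a line. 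Thus $A_1'\otimes H^0(\J_{H_1}(1))\subset\mathrm{image}(\varphi')$, which says $(\alpha_1',H_1)$ is a jumping pair of $F'$, hence $H_1\in J(F')$. The main obstacle, and the only place real care is needed, is precisely this duality/annihilator bookkeeping in (iii) and the corresponding identification in (i) of $T'$ and $S'$ with the hyperplanes named in the statement — everything else is a formal diagram chase once those identifications are correct; I would handle it by consistently translating between subspaces of $T$ and $S$ and their annihilators in $T^*$ and $S^*$ using the Segre picture of Example \ref{ejemploslineales}.
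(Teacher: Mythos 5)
Your overall strategy coincides with the paper's: a $3\times3$ commutative diagram built from the two resolutions and the sequence $0\to F'\to F\to\O_H\to0$ for (i), dualizing and taking cohomology for (ii), and for (iii) the observation that $i^*(v_1)\otimes h_1$ is a nonzero tensor in the image of $\varphi'$ whenever $\alpha_1\ne\alpha$ (because $\ker i^*$ is exactly the line in $S^*$ representing $\alpha$). Your parts (ii) and (iii) are essentially the paper's argument, including the duality bookkeeping you worry about, which you resolve correctly.

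There is, however, a concrete error in your part (i): the exact sequences you assign to the left and middle columns are not exact, so the snake-lemma step as you state it would fail. The cokernel of the inclusion $T'\otimes\O_{\P^n}\hookrightarrow T\otimes\O_{\P^n}$ is $\O_{\P^n}$, not $\O_H$; the actual kernel of the composite $T\otimes\O_{\P^n}\to F\to\O_H$ is strictly larger than $T'\otimes\O_{\P^n}$ (it is an extension of $\O_{\P^n}(-1)$ by $T'\otimes\O_{\P^n}$). Likewise the cokernel of $S'\otimes\O_{\P^n}(-1)\hookrightarrow S\otimes\O_{\P^n}(-1)$ is $\O_{\P^n}(-1)$, not $\O_H(-1)$. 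The correct diagram --- the one in the paper --- has as columns the three short exact sequences with cokernels $\O_{\P^n}(-1)$, $\O_{\P^n}$ and $\O_H$, and the snake lemma applied to the two resolutions produces precisely the Koszul sequence $0\to\O_{\P^n}(-1)\to\O_{\P^n}\to\O_H\to0$ of the hyperplane $H$ as the bottom row, rather than the three $\O_H$-supported cokernels you claim. Once this is repaired, your identifications of $T'$ with the hyperplane cut out by $(\alpha,H)\in\P(T)$ and of $S'$ with the one cut out by $\alpha\in\P(S)$, and the remainder of your argument, go through exactly as in the paper.
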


\begin{proof} 
We have the following commutative diagram
$$\begin{matrix}
&&0&&0&&0&\cr
&&\downarrow&&\downarrow&&\downarrow&\cr
0&\to&S'\otimes\O_{\P^n}(-1)&\to&T'\otimes\O_{\P^n}&\to&F'&\to&0\cr
&&\downarrow&&\downarrow&&\downarrow&\cr
0&\to&S\otimes\O_{\P^n}(-1)&\to&T\otimes\O_{\P^n}&\to&F&\to&0\cr
&&\downarrow&&\downarrow&&\downarrow&\cr
0&\to&\O_{\P^n}(-1)&\to&\O_{\P^n}&\to&\O_H&\to&0\cr
&&\downarrow&&\downarrow&&\downarrow&\cr
&&0&&0&&0&
\end{matrix}$$
where the first column is defined by the quotient of $S$
corresponding to $\alpha$, the second column is defined by the
quotient of $T$ corresponding to $(\alpha,H)$, and the first row
is defined as a kernel. This proves (i).

Taking duals, we get another commutative diagram
$$\begin{matrix}
0&\to&F^*&\to&T^*\otimes\O_{\P^n}&\to&S^*\otimes\O_{\P^n}(1)&\to&0\cr
&&\downarrow&&\downarrow&&\downarrow&\cr
0&\to&{F'}^*&\to&{T'}^*\otimes\O_{\P^n}&\to&{S'}^*\otimes\O_{\P^n}(1)&\to&0\cr
\end{matrix}$$
which, taking cohomology, produces (ii). 

To prove (iii), consider any jumping hyperplane $H_1$ of $F$  and
assume it is not in $\pi_2\pi_1^{-1}(\alpha)$, so that it comes from a
jumping pair $(\alpha_1,H_1)$ with $\alpha_1\ne\alpha$. This jumping
pair is represented by a nonzero tensor $v_1\otimes h_1\in S^*\otimes
H^0(\O_{\P^n}(1))$ in the image of $\varphi$ (where $h_1$ is an
equation of $H_1$). Since $\alpha_1\ne\alpha$, it follows that
$i^*(v_1)\otimes h_1$ is nonzero, and it is also in the image of
$\varphi'$, by (ii). This implies that $([i^*(v_1)],H_1)$ is a
jumping pair of $F'$, so that $H_1$ is a jumping hyperplane of $F'$,
as wanted.
\end{proof}

\begin{remark} \label{dificultaditeracion} {\rm
The idea now is that, when performing the iteration process, part
(iii) of Proposition \ref{transform} should provide enough
information to keep track the set of jumping pairs until we arrive
to a Steiner bundle with $s=2$. There are two difficulties to do
so. First of all, some bundle in the iteration process could be
non reduced, although we could deal with this taking its reduced
summand and using Lemma \ref{propjumping}(i). The main difficulty
is however that Proposition \ref{transform}(iii) does not relate
$J(F)$ and $J(F')$ if $J(F)$ is contained in some
$\pi_2\pi_1^{-1}(\alpha)$. Of course this behavior seems very
unlikely (for instance, it does not hold if
$\dim\tilde J(F)=t-n-s+1$, as Theorem \ref{teoremon}(ii) guarantees), and
we could impose that it does not hold for our original $F$, but still
it could hold for some other Steiner bundle in the iteration process. 

In the case of Steiner bundles of rank $n$ (the one studied in
\cite{V1}), which are always reduced, this last difficulty can be avoided as
follows. Any Steiner bundle $F'$ in the process has rank $n$, so that
from Theorem \ref{teoremon}(i) its set of jumping hyperplanes has dimension
at most one. Therefore, if the projection $\pi'_1:\tilde J(F')\to
\P(S')$ were constant, its fiber (which is a linear space, by
Example \ref{ejemploslineales}(ii)) would be either a point or a line. It
cannot be a line by Theorem \ref{teoremon}(ii), so that necessarily
$F'$ would have only one jumping hyperplane. This is the
key underlying idea in \cite{V1} that allows even to limit the number
of jumping hyperplanes when there are finitely many.
}\end{remark}

The key to deal with the first difficulty of Remark
\ref{dificultaditeracion} is the following (in which we also pay
attention to jumping pairs instead of just jumping hyperplanes):

\begin{proposition} \label{proptransform}
In the situation of Proposition \ref{transform}, set
${T'_0}^*:={\rm Im}\varphi'$ and let
$F'=F'_0\oplus(T'/T'_0)\otimes\O_{\P^n}$ be the decomposition of
Lemma \ref{reduced}. Then:
\begin{enumerate}
\item[(i)] The projection from the linear subspace
$\pi_1^{-1}(\alpha)\subset\P(T)$ is the map 
$pr_{(\alpha,H)}:\P(T)\to\P(T'_0)$ induced by the composition
$T\mapright{j^*}T'\mapright{\varphi'}T'_0$. In particular, $\dim
T'_0=t-1-\dim\pi_1^{-1}(\alpha)$.
\item[(ii)] If $pr_\alpha:\P(S)\to\P(S')$ denotes the projection
from $\alpha$, for any $(\alpha_1,H_1))\in\tilde J(F)$ with
$\alpha_1\ne\alpha$, we have the equality
$$pr_{(\alpha,H)}(\alpha_1,H_1)=(pr_\alpha(\alpha_1),H_1)$$ 
and this is a jumping pair of $F'$ and $F'_0$.
\item[(iii)] $\tilde J(F'_0)$ contains the image under
$pr_{(\alpha,H)}$ of any
component of $\tilde J(F)\subset\P(T)$ not contained in 
$\pi_1^{-1}(\alpha)$.
\item[(iv)] $\Sigma(F'_0)$ contains the image under
$pr_\alpha$ of any
component of $\Sigma(F)\subset\P(S)$ different from
$\{\alpha\}$.
\end{enumerate}
\end{proposition}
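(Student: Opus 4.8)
The plan is to extract everything from the commutative diagram of Proposition \ref{transform}(ii) together with the geometric dictionary of Remark \ref{geometria} and Example \ref{ejemploslineales}. First I would establish (i). The linear subspace $\pi_1^{-1}(\alpha)\subset\P(T)$ is, by Example \ref{ejemploslineales}(ii), a linear space; concretely, $\pi_1^{-1}(\alpha)\subset\tilde J(F)$ consists of the jumping pairs of the form $(\alpha,H_1)$, i.e. of the tensors $v\otimes h\in T_0^*=T^*$ (here $F$ is reduced, so $T_0^*=T^*$) with $[v]=\alpha$. Since $i^*(v)=0$ precisely when $v$ spans the line $A$ representing $\alpha$, such a tensor $v\otimes h$ lies in $\ker(i^*\otimes\mathrm{id})$, hence in $\ker\varphi'$ after passing through $j^*$ and $\varphi$. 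Thus the composite $T\xrightarrow{j^*}T'\xrightarrow{\varphi'}T'_0$ kills exactly the subspace $\pi_1^{-1}(\alpha)$ of $T^*$ (one inclusion is the observation just made; for the reverse, if $j^*$ then $\varphi'$ kills $w\in T^*$, then by the diagram $\varphi(w)=v\otimes h'$ lies in $\ker(i^*\otimes\mathrm{id})=A\otimes H^0(\O_{\P^n}(1))$, so $w$ represents a jumping pair with first coordinate $\alpha$). Therefore the induced map $\P(T)\dashrightarrow\P(T'_0)$ is precisely the projection $pr_{(\alpha,H)}$ from $\pi_1^{-1}(\alpha)$, and $\dim T'_0=\dim T^*-\dim\pi_1^{-1}(\alpha)-1=t-1-\dim\pi_1^{-1}(\alpha)$.

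Next I would prove (ii). Take $(\alpha_1,H_1)\in\tilde J(F)$ with $\alpha_1\ne\alpha$, represented by a nonzero tensor $v_1\otimes h_1$ in the image of $\varphi$ with $[v_1]=\alpha_1$ and $h_1$ an equation of $H_1$. Since $\alpha_1\ne\alpha$, we have $i^*(v_1)\ne 0$, so $(i^*\otimes\mathrm{id})(v_1\otimes h_1)=i^*(v_1)\otimes h_1$ is a nonzero decomposable tensor; by the commutativity of the diagram in Proposition \ref{transform}(ii) it lies in the image of $\varphi'$, hence (by Lemma \ref{propjumping}(ii), applied to $F'$) the pair $([i^*(v_1)],H_1)$ is a jumping pair of $F'$. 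Under $pr_{(\alpha,H)}$, the point $v_1\otimes h_1$ of $\P(T)$ maps to the class of $\varphi'(j^*(v_1\otimes h_1))=i^*(v_1)\otimes h_1$ in $\P(T'_0)$; regarded inside $\P(S')\times{\P^n}^*$ via the Segre picture, this is exactly $(pr_\alpha(\alpha_1),H_1)$. Finally, since $i^*(v_1)\otimes h_1\in\mathrm{Im}\,\varphi'=T'^*_0$, this decomposable tensor actually represents a jumping pair of the reduced summand $F'_0$ as well, by Lemma \ref{propjumping}(i) (jumping pairs of $F'$ and $F'_0$ coincide). This gives (ii).

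Parts (iii) and (iv) are then formal consequences. For (iii): let $Z\subset\tilde J(F)\subset\P(T)$ be an irreducible component not contained in $\pi_1^{-1}(\alpha)$. Then $Z\setminus\pi_1^{-1}(\alpha)$ is a dense open subset of $Z$ consisting of jumping pairs $(\alpha_1,H_1)$ with $\alpha_1\ne\alpha$, each of which maps under $pr_{(\alpha,H)}$ to a jumping pair of $F'_0$ by (ii); since $\tilde J(F'_0)$ is closed (Example \ref{ejemploslineales}), the closure of the image of $Z$, which equals $pr_{(\alpha,H)}(Z)$, lies in $\tilde J(F'_0)$. Part (iv) is obtained the same way using $\pi_1$ and the projection $pr_\alpha$ on the first factor: a component $C$ of $\Sigma(F)\subset\P(S)$ with $C\ne\{\alpha\}$ contains a point $\alpha_1\ne\alpha$ lying under some jumping pair, so by (ii) $pr_\alpha(\alpha_1)\in\Sigma(F'_0)$ on a dense subset of $C$, hence $pr_\alpha(C)\subset\Sigma(F'_0)$.

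The only genuinely delicate point is the identification in (i) of $\ker\bigl(T\xrightarrow{j^*}T'\xrightarrow{\varphi'}T'_0\bigr)$ with $\pi_1^{-1}(\alpha)$ as a subspace of $T^*=T_0^*$: one must be careful that the diagram of Proposition \ref{transform}(ii) really forces $\varphi(w)$ to be a decomposable tensor lying in $A\otimes H^0(\O_{\P^n}(1))$ whenever $w$ is in that kernel, rather than merely in some larger subspace. This follows because $\ker(i^*\otimes\mathrm{id})=A\otimes H^0(\O_{\P^n}(1))$ exactly (as $A$ has codimension $1$ in $S^*$, so $\ker i^*=A$), and the image of $\varphi$ meets this subspace only in decomposable tensors of the required jumping-pair form by Lemma \ref{propjumping}(ii). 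Everything else is bookkeeping with the Segre dictionary of Example \ref{ejemploslineales} and the splitting lemmas.
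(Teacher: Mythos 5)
Your proof is correct and follows the same route as the paper: part (i) is the identification of the subspace of $T^*$ corresponding to $\pi_1^{-1}(\alpha)$ with $\ker\bigl((i^*\otimes{\rm id})\circ\varphi\bigr)=\ker(\varphi'\circ j^*)$, part (ii) is the Segre-dictionary reading of the commutative diagram of Proposition \ref{transform}(ii) together with Lemma \ref{propjumping}, and (iii)--(iv) follow formally from (ii) by a closure argument. Your only wobble is cosmetic: in the closing paragraph the decomposability of $\varphi(w)$ needs no appeal to the image of $\varphi$, since every nonzero element of $A\otimes H^0(\O_{\P^n}(1))$ is already decomposable because $\dim A=1$, which you in fact use correctly in the body of the argument.
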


\begin{proof} 
It follows readily from the commutative diagram of Proposition
\ref{transform}(ii). For example, part (i) comes from the fact that the
subspace of $T^*$ corresponding to $\pi_1^{-1}(\alpha)$ is the
kernel of $(i^*\otimes id)\circ\varphi=\varphi'\circ j^*$. Part
(ii) is now the interpretation of the diagram of Proposition
\ref{transform}(ii) (recall that $F'$ and $F'_0$ has the same jumping
pairs, by Lemma \ref{propjumping}(i)). Finally, parts (iii) and (iv) are
proved from (ii) (in fact, it is the same proof as the one of of
Proposition \ref{transform}(iii)).
\end{proof}

The next proposition shows that, for Steiner bundles of arbitrary
rank, the second difficulty of Remark \ref{dificultaditeracion} can
be overcome with the same ideas as in the case of rank $n$ if we
assume that the set of jumping pairs has the maximal dimension
allowed by Theorem \ref{teoremon}(i) (observe that, in this case,
the bundle is necessarily reduced, by Theorem
\ref{teoremon}(iii)).

\begin{proposition} \label{maximaltransform}
Let $F$ be an $(s,t)$-Steiner bundle on $\P^n$ with $s\ge2$ and
such that
$\tilde J(F)$ has  dimension $t-n-s+1$. Let $\tilde J_0$ be a
component of $\tilde J(F)$ of maximal dimension and fix
$(\alpha,H)\in\tilde J_0$. Then, if $F'$ is the Steiner bundle
constructed in Proposition \ref{transform} and $F'_0$ is its
reduced part, the following hold:
\begin{enumerate}
\item[(i)] The image of both $\tilde J_0$ and $\tilde J(F)$ under
the projection $pr_{(\alpha,H)}$ from $\pi_1^{-1}(\alpha)$ has
dimension  $t-n-s+1-\dim\pi_1^{-1}(\alpha)$.
\item[(ii)] $\tilde J(F'_0)$ has dimension
$t-n-s+1-\dim\pi_1^{-1}(\alpha)$.
\item[(iii)] If $\tilde J(F'_0)$ is irreducible, then:
\begin{enumerate} 
\item[a)] $\tilde J(F'_0)$ is the image of $\tilde J(F)$ under
the projection $pr_{(\alpha,H)}$ from $\pi_1^{-1}(\alpha)$.
\item[b)] $\tilde J(F)$ is irreducible.
\item[c)] $J(F)=J(F'_0)$.
\item[d)] $\Sigma(F'_0)$ is the image of $\Sigma(F)$ under the
inner projection $pr_\alpha$ from $\alpha$.
\end{enumerate}
\end{enumerate}
\end{proposition}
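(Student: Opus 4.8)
The plan is to leverage the dimension count already available from Theorem \ref{teoremon} together with the projection formulas of Proposition \ref{proptransform}. First I would establish (i): since $F$ is reduced (Theorem \ref{teoremon}(iii)) and $\dim\tilde J(F)=t-n-s+1$, any component of maximal dimension is smooth (again Theorem \ref{teoremon}(iii)), and by Theorem \ref{teoremon}(ii) the projection $\pi_1$ restricted to $\tilde J_0$ is not constant, so $\pi_1^{-1}(\alpha)\cap\tilde J_0$ is a proper linear subspace of $\tilde J_0$. The center of projection $\pi_1^{-1}(\alpha)$ is a linear subspace of $\P(T)$ (Example \ref{ejemploslineales}(ii)) and $pr_{(\alpha,H)}$ is linear projection, so $\dim pr_{(\alpha,H)}(\tilde J_0)\ge \dim\tilde J_0-\dim\pi_1^{-1}(\alpha)=t-n-s+1-\dim\pi_1^{-1}(\alpha)$; the reverse inequality is automatic since fibers of a linear projection restricted to a variety through points of the center have dimension at least... — more carefully, I would argue that a general fiber of $pr_{(\alpha,H)}|_{\tilde J_0}$ is the intersection of $\tilde J_0$ with a linear space containing $\pi_1^{-1}(\alpha)$, hence by property (iii) of Example \ref{ejemploslineales} any such fiber is contained in a fiber of $\pi_1$ or $\pi_2$, and one shows it must be a fiber of $\pi_1$, giving the exact dimension. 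The same count applied to all of $\tilde J(F)$ (whose maximal-dimension components dominate the computation) yields (i) in full.

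For (ii), I would combine (i) with Proposition \ref{proptransform}(iii): $\tilde J(F'_0)$ contains $pr_{(\alpha,H)}$ of every component of $\tilde J(F)$ not inside $\pi_1^{-1}(\alpha)$, so $\dim\tilde J(F'_0)\ge t-n-s+1-\dim\pi_1^{-1}(\alpha)$. By Proposition \ref{proptransform}(i), $F'_0$ is an $(s-1,t'_0)$-Steiner bundle with $t'_0=t-1-\dim\pi_1^{-1}(\alpha)$; applying Theorem \ref{teoremon}(i) to $F'_0$ gives $\dim\tilde J(F'_0)\le t'_0-n-(s-1)+1=t-n-s+1-\dim\pi_1^{-1}(\alpha)$. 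The two bounds coincide, proving (ii).

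Part (iii) is where the real content lies, and the irreducibility hypothesis on $\tilde J(F'_0)$ is what drives it. Assuming $\tilde J(F'_0)$ is irreducible, (ii) tells us it has the maximal dimension $t'_0-n-(s-1)+1$, so Theorem \ref{teoremon}(iii) applies to $F'_0$ as well and it is reduced with $\tilde J(F'_0)$ smooth. For (a): by Proposition \ref{proptransform}(iii), $\tilde J(F'_0)$ contains $pr_{(\alpha,H)}(\tilde J_0)$, which by (i) is an irreducible subvariety of the same dimension $\dim\tilde J(F'_0)$; since $\tilde J(F'_0)$ is irreducible, the two are equal, and moreover $pr_{(\alpha,H)}$ of every other component of $\tilde J(F)$ lands in this same set. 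For (b): if $\tilde J(F)$had a component $\tilde J_1$ other than $\tilde J_0$, either $\tilde J_1\subset\pi_1^{-1}(\alpha)$ or $pr_{(\alpha,H)}(\tilde J_1)\subseteq\tilde J(F'_0)=pr_{(\alpha,H)}(\tilde J_0)$; in the first case $\tilde J_1$ would be a component with $\pi_1$ constant, contradicting Theorem \ref{teoremon}(ii), and in the second case one uses that $pr_{(\alpha,H)}$ restricted to $\tilde J(F)\setminus\pi_1^{-1}(\alpha)$ has fibers that are (pieces of) fibers of $\pi_1$ — so $\tilde J_1$ and $\tilde J_0$ share fibers of $\pi_1$ over a dense set, forcing them to agree by smoothness of $\tilde J_0$ at all its points together with the fiber structure; I expect this gluing argument to be the main obstacle and the place requiring the most care. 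For (c): $J(F)=\pi_2(\tilde J(F))$ and $J(F'_0)=\pi'_2(\tilde J(F'_0))$; using (a) and the formula $pr_{(\alpha,H)}(\alpha_1,H_1)=(pr_\alpha(\alpha_1),H_1)$ from Proposition \ref{proptransform}(ii), the second coordinate (the hyperplane $H_1$) is preserved under $pr_{(\alpha,H)}$, so $\pi_2$ of $\tilde J(F)$ and $\pi'_2$ of its image agree, except possibly for points of $\tilde J(F)$ in $\pi_1^{-1}(\alpha)$ — but those have second coordinate ranging over $\pi_2\pi_1^{-1}(\alpha)$, and one must check (via Proposition \ref{transform}(iii), $J(F)\subset J(F')\cup\pi_2\pi_1^{-1}(\alpha)$, combined with irreducibility) that no hyperplane is lost. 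For (d): identical to (c) but reading off first coordinates and using Proposition \ref{proptransform}(iv), noting $pr_\alpha$ is the inner projection from $\alpha\in\P(S)$.
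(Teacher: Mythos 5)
Your overall strategy coincides with the paper's: the dimension sandwich in (ii) obtained by applying Theorem \ref{teoremon}(i) to $F'_0$ together with Proposition \ref{proptransform}(i) and (iii), and the treatment of (iii)(a), (c), (d) by tracking coordinates through the formula $pr_{(\alpha,H)}(\alpha_1,H_1)=(pr_\alpha(\alpha_1),H_1)$. However, two steps have genuine gaps. In (i), your opening claim that projection from a $d$-dimensional linear center drops $\dim\tilde J_0$ by at most $d$ is false in general (it fails precisely when $\tilde J_0$ is a cone with vertex the center), and your repair misapplies Example \ref{ejemploslineales}(iii): the fibers of $pr_{(\alpha,H)}|_{\tilde J_0}$ are intersections of $\tilde J_0$ with $(d+1)$-planes through the center, and such intersections need not be \emph{linear} subspaces of $\tilde J(F)$, so that property does not apply to them. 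The paper's route is: if the dimension dropped by $d+1$, then $\tilde J_0$ would be a cone with vertex $\pi_1^{-1}(\alpha)$; the lines of the cone through $(\alpha,H)$ \emph{are} linear subspaces of $\tilde J(F)$, hence each lies in a fiber of $\pi_1$ or of $\pi_2$, forcing $\tilde J_0\subset\pi_1^{-1}(\alpha)\cup\pi_2^{-1}(H)$ and contradicting Theorem \ref{teoremon}(ii). Your idea is salvageable along these lines but is not yet a proof.

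More seriously, (iii)(b) --- the irreducibility of $\tilde J(F)$, which is the real content of the proposition (cf.\ Remark \ref{novaleV}) --- is not established. You dispose of a putative component $\tilde J_1\subset\pi_1^{-1}(\alpha)$ by ``contradicting Theorem \ref{teoremon}(ii)'', but that statement only bounds the dimension of a component with constant projection; it does not forbid its existence. The paper instead shows that $\pi_1^{-1}(\alpha)$, being an irreducible linear space through $(\alpha,H)$, must lie inside $\tilde J_0$, because otherwise two components would meet at $(\alpha,H)$, which is a smooth point of $\tilde J(F)$ by Theorem \ref{teoremon}(iii). For the remaining case your ``gluing argument'' is both unexecuted (you flag it yourself as the main obstacle) and based on a wrong premise: by Proposition \ref{proptransform}(ii) the map $pr_{(\alpha,H)}$ preserves the hyperplane coordinate, so its fibers on $\tilde J(F)\setminus\pi_1^{-1}(\alpha)$ sit inside fibers of $\pi_2$, not of $\pi_1$ (as you yourself use in (c)). The missing idea is the paper's trisecant argument: a point of $\tilde J_1\setminus\tilde J_0$ and a point of $\tilde J_0$ with the same image span a line meeting $\pi_1^{-1}(\alpha)$, hence a trisecant of $\tilde J(F)$; since $\tilde J(F)$ is cut out by quadrics (Example \ref{ejemploslineales}(i)) this line is contained in $\tilde J(F)$, necessarily in a component other than $\tilde J_0$, which then meets $\tilde J_0$ at a point of $\pi_1^{-1}(\alpha)$ --- again a smooth point --- a contradiction. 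Without this (or an equivalent) argument, (iii)(b) is unproved, and your versions of (iii)(c) and (iii)(d) also lean on the irreducibility of $\tilde J(F)$ and $J(F)$ that it provides.
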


\begin{proof} 
Since, by Theorem \ref{teoremon}(i), $\tilde J(F'_0)$ has dimension at
most $\dim T'_0-n-(s-1)+1$ and, by Proposition
\ref{proptransform}(i), $\dim T'_0=t-1-\dim\pi_1^{-1}(\alpha)$,
part (i) will follow if we prove that the image of $\tilde J_0$
under $pr_{(\alpha,H)}$ has dimension at least
$t-n-s+1-\dim\pi_1^{-1}(\alpha)$. Assume by contradiction that
$\tilde J_0$ drops dimension by $\dim\pi_1^{-1}(\alpha)+1$ when
projecting from $\pi_1^{-1}(\alpha)$. This means that $\tilde J_0$ is
a cone with vertex $\pi_1^{-1}(\alpha)$. Since any line in the cone
is contained in a fiber of $\pi_1$ or $\pi_2$ (Example
\ref{ejemploslineales}(iii)), it follows that $\tilde J_0$ is contained in
$\pi_1^{-1}(\alpha)\cup\pi_2^{-1}(H)$. But
$\tilde J_0$ is irreducible, so that it contained in
$\pi_1^{-1}(\alpha)$ or $\pi_2^{-1}(H)$, which  contradicts
Theorem \ref{teoremon}(ii). 

To prove (ii), we have, on one hand, that Proposition
\ref{proptransform}(iii) implies that $\tilde J(F')$ contains the
image of $\tilde J_0$ under $pr_{(\alpha,H)}$, which has dimension
$t-n-s+1-\dim\pi_1^{-1}(\alpha)$, by (i). On the other hand, Theorem
\ref{teoremon}(i) implies $\dim J(F')\le
t-n-s+1-\dim\pi_1^{-1}(\alpha)$, so that (ii) follows. 

To prove (iii), observe first that $\tilde J(F)$ cannot have any
component contained in $\pi_1^{-1}(\alpha)$. Indeed
$\pi_1^{-1}(\alpha)$ is contained in $\tilde J_0$, since otherwise it
would be contained in another component of $\tilde J(F)$. But then
such a component would meet $\tilde J_0$ at least at the point
$(\alpha,H)$, implying that $(\alpha,H)$ is a singular point of
$\tilde J(F)$, contradicting Theorem \ref{teoremon}(iii).

I claim now that $\tilde J(F')$ coincides with the image  of both
$\tilde J_0$ and $\tilde J(F)$ under $pr_{(\alpha,H)}$. Indeed, both
images are contained in $\tilde J(F')$ by Proposition
\ref{proptransform}(iii) (and the above observation), and on the
other hand they have dimension $t-n-s+1-\dim\pi_1^{-1}(\alpha)$, by
(i). Since, by (ii), $\tilde J(F')$ has also dimension
$t-n-s+1-\dim\pi_1^{-1}(\alpha)$, its irreducibility proves the
claim, and also part a).

To prove part b), assume for contradiction that $\tilde J(F)$ has
another component $\tilde J_1$ different from $\tilde J_0$, and fix
any point $(\alpha_1,H_1)\in\tilde J_1\setminus\tilde J_0$. By our
previous claim, the image of $(\alpha_1,H_1)$ under $pr_{(\alpha,H)}$
is also in the image of $\tilde J_0$. In particular, there is a
line $\Delta$ trisecant to $\tilde J(F)$, passing through
$(\alpha_1,H_1)$ and meeting $\pi_1^{-1}(\alpha)$. Since $\tilde
J(F)$ is cut out by quadrics (Example \ref{ejemploslineales}(i)),
it follows that $\Delta$ is contained in $\tilde J(F)$. But
$\Delta\not\subset\tilde J_0$, so that there is another component
of $\tilde J(F)$ containing $\Delta$. Therefore
$\tilde J_0$ meets that component at the point $(\alpha,H)$, so that
$(\alpha,H)$ is a singular point of $\tilde J(F)$ that is in $\tilde
J_0$. This contradicts once more Theorem \ref{teoremon}(iii), hence b)
holds.

We prove part c) by showing the double inclusion.
Observe first that the irreducibility of $\tilde J(F)$ implies the
irreducibility of  $J(F)$. Thus, Proposition \ref{transform}(iii)
implies, together with Theorem \ref{teoremon}(ii), that $J(F)$ is
contained in $J(F')$, which is $J(F'_0)$ by Lemma
\ref{propjumping}(i), so that we are left to prove the other
inclusion. Since $pr_{(\alpha,H)}(\tilde
J(F)\setminus\pi_1^{-1}(\alpha))$ is dense in
$\tilde J(F'_0)$, also $\pi'_1(pr_{(\alpha,H)}(\tilde
J(F)\setminus\pi_1^{-1}(\alpha)))$ is dense in $J(F'_0)$, so it is
enough to prove that any element of it is also in $J(F)$. We thus
take $H'\in J(F'_0)$ for which there exists $\alpha'\in\P(S')$
such that $(\alpha',H')=pr_{(\alpha,H)}(\alpha_1,H_1)$ for some
$(\alpha_1,H_1)\in\tilde J(F)$ with $\alpha_1\ne\alpha$.
Since $pr_{(\alpha,H)}(\alpha_1,H_1)=(pr_\alpha(\alpha_1),H_1)$ by
Proposition \ref{proptransform}(ii), it follows $H'=H_1$, hence $H'\in
J(F)$, as wanted. 

Finally, part d) is proved also by double inclusion. First, observe
that $\Sigma(F)$ is irreducible by b), so that it cannot be just
$\{\alpha\}$ by Theorem \ref{teoremon}(ii). Therefore, Proposition
\ref{proptransform}(iv) implies that $\Sigma(F'_0)$ contains the
image of $\Sigma(F)$ under $pr_\alpha$. Reciprocally, take any
$\alpha'\in\Sigma(F'_0)$. As before, we can assume that there exists
$H'\in J(F'_0)$ such that $(\alpha',H')=pr_{(\alpha,H)}(\alpha_1,H_1)$
for some $(\alpha_1,H_1)\in\tilde J(F)$ with $\alpha_1\ne\alpha$.
Hence Proposition \ref{proptransform}(ii) implies
$\alpha'=pr_\alpha(\alpha_1)$. Since obviously
$\alpha_1\in\Sigma(F)$, the result follows.
\end{proof}

\begin{remark} \label{simetria} {\rm
Exactly in the same way as in Proposition \ref{transform}, one
could construct from $F$ and a jumping pair $(\alpha,H)$ the
Steiner bundle defined by ${T'}^*\to S^*\otimes H^0(\O_H(1))$. This
time we get an $(s,t-1)$-Steiner bundle $F'$ on
$H$ and the same results of this section hold by permuting the roles
of $J(F)$ and $\Sigma(F)$. In particular, if $\tilde J(F)$ has the
maximal dimension allowed by Theorem \ref{teoremon}(i), then also $\tilde
J(F')$ has the maximal dimension allowed by Theorem
\ref{teoremon}(i); and if $\tilde J(F')$ is irreducible, then
$\Sigma(F)=\Sigma(F')$. We will not prove it, since it is done
exactly in the same way.
}\end{remark}

Before stating and proving our main result, we include, for
the reader's convenience, the following easy lemma about varieties
of minimal degree that we will need. By variety of minimal
degree we mean a nondegenerate irreducible variety in a projective
space such that its degree minus its codimension is one. We recall
(see for example \cite{H} Theorem 19.9) that a smooth variety of minimal
degree is either a quadric, a rational normal scroll (this includes
the whole projective space and rational normal curves) or a Veronese
surface in $\P^5$. 

\begin{lemma} \label{minimaldegree} 
Let $X\subset\P^N$ be a proper smooth irreducible projective
variety that is cut out by quadrics. Assume that $X$ contains an
$r$-dimensional linear subspace $\Lambda$ such that the projection
of $X$ from $\Lambda$ is a subvariety $X'\subset\P^{N-r-1}$ of
minimal degree with $\dim X'=\dim X-r$. Then also $X$ is a variety
of minimal degree.
\end{lemma}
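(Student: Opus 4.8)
The plan is to relate the Hilbert polynomial (or just the degree) of $X$ to that of its projection $X'$ by controlling the fibers of the projection. First I would set up the projection $p_\Lambda : X \dashrightarrow X' \subset \P^{N-r-1}$ from the linear subspace $\Lambda \subset X$. Since $X$ is cut out by quadrics and contains the $r$-plane $\Lambda$, the key observation is that a line $\ell$ through a point $x \in X \setminus \Lambda$ meeting $\Lambda$ in a point, if it is trisecant to $X$ (i.e.\ meets $X$ in $\Lambda$ with multiplicity $\ge 1$ plus $x$ plus something), must be contained in $X$; more precisely, the general fiber of $p_\Lambda$ is finite because $\dim X' = \dim X - r$, and for $X$ cut out by quadrics the generic secant line from $\Lambda$ meets $X\setminus\Lambda$ in a single point. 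I would make this precise by blowing up $X$ along $\Lambda$ (or along the scheme-theoretic intersection, but $\Lambda \subset X$ is a smooth subvariety of a smooth variety, so this is clean), obtaining $\tilde X \to X$ with exceptional divisor $E$, and a morphism $\tilde p : \tilde X \to X'$ resolving the projection.

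The central computation is then a degree count. The linear system defining $p_\Lambda$ on $X$ is $|H - E|$ on $\tilde X$, where $H$ is the hyperplane class pulled back to $\tilde X$; equivalently, it is the restriction to $X$ of the linear system of hyperplanes through $\Lambda$. Since $p_\Lambda$ is generically finite of degree $d_0$ onto $X'$ (I claim $d_0 = 1$, using the quadric hypothesis: a general point of $X'$ has a unique preimage, otherwise two preimages would give a secant line to $X$ meeting $\Lambda$, forcing a line in $X$ through $\Lambda$ and making $X'$ drop dimension or raising the fiber dimension contrary to $\dim X' = \dim X - r$), we get $\deg X' = (H-E)^{\dim X} \cdot [\tilde X]$, while $\deg X = H^{\dim X}$. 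Expanding $(H-E)^{\dim X}$ and using that $E$ is a $\P^r$-bundle over $\Lambda \cong \P^r$ with the standard relations among $H|_E$ and the class of $E|_E = \mathcal O(-1)$ on the fibers, I would show
$$\deg X - \deg X' = \binom{?}{?}\cdots = 1 \cdot (\text{something forced to equal } 1),$$
so that, combined with $\mathrm{codim}\, X = \mathrm{codim}\, X' + (r+1) - r = \mathrm{codim}\,X' + 1$ wait — actually $\mathrm{codim}_{\P^N} X = N - \dim X$ and $\mathrm{codim}_{\P^{N-r-1}} X' = (N-r-1) - (\dim X - r) = N - \dim X - 1 = \mathrm{codim}\, X - 1$. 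Since $X'$ has minimal degree, $\deg X' = \mathrm{codim}\, X' + 1 = \mathrm{codim}\, X$, and I must show $\deg X = \mathrm{codim}\, X + 1 = \deg X' + 1$. So the whole lemma reduces to the single clean statement: \emph{projecting a quadric-defined smooth $X$ from an $r$-plane $\Lambda \subset X$, with the fiber dimension staying minimal, drops the degree by exactly $1$.} That degree $= \deg X' + 1$ is exactly the claim.

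The main obstacle will be making the intersection-theoretic computation on the blow-up $\tilde X$ genuinely rigorous — in particular justifying that $p_\Lambda$ has degree $1$ onto its image (this is where "cut out by quadrics" is indispensable: through a general point of $\Lambda$ and a general point $x$ of $X$, the line $\overline{\Lambda \cap \ell}$ wait, more carefully: any line meeting $X$ in $\ge 3$ points, counted with a length-$2$ scheme on $\Lambda$, lies on $X$ since $X$ is an intersection of quadrics; if $p_\Lambda$ had degree $\ge 2$, a general point of $X'$ would have $\ge 2$ preimages $x_1, x_2$, and the line through them meets $\Lambda$, giving a trisecant line hence a line $L \subset X$ through $\Lambda$; sweeping out, $\Lambda$ would lie in the singular locus of the fiber or $X'$ would drop dimension — ruling this out needs the genericity and the dimension hypothesis $\dim X' = \dim X - r$). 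A cleaner route avoiding blow-ups: I would instead argue that $X$ is ruled by the lines joining $\Lambda$ to its generic fiber-point is not right either — so the honest approach is the blow-up degree count, and alternatively one can invoke the classification directly: knowing $X$ is smooth and cut out by quadrics with such a projection, and that by the inductive/classification structure $X'$ is a quadric, a scroll, or the Veronese, one lifts case by case (a cone over a variety of minimal degree with vertex a linear space is again of minimal degree, and smoothness forces $X$ itself into the same list). I would present the degree computation as the main argument and remark that the classification of varieties of minimal degree (\cite{H} Theorem 19.9) then identifies $X$.
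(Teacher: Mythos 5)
The central step of your plan does not work as stated: when $r\ge 1$, the projection $p_\Lambda\colon X\dashrightarrow X'$ is \emph{not} generically finite. Since $\dim X'=\dim X-r$, its general fibre has dimension $r$ (it is an $r$-dimensional subvariety of the $(r+1)$-plane spanned by $\Lambda$ and a point of the fibre), so your $d_0$ is undefined and the intersection number $(H-E)^{\dim X}$ on the blow-up equals $0$, not $\deg X'$: the class $H-E$ is pulled back from the lower-dimensional variety $X'$, so its top self-intersection vanishes by the projection formula. Your arithmetic reduction is correct --- indeed $\operatorname{codim}X'=\operatorname{codim}X-1$, so the lemma is equivalent to $\deg X=\deg X'+1$ --- but the proposed computation of $\deg X'$ collapses. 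To salvage a direct degree count you would have to cut with only $\dim X'$ general hyperplanes through $\Lambda$ and then control the degrees of the resulting $r$-dimensional fibre closures inside their $(r+1)$-planes, which is essentially as hard as the lemma itself. Your trisecant observation (two points in a fibre span a line that meets $\Lambda$, hence a trisecant, hence a line contained in $X$) is sound, but it is being deployed to prove birationality of a map that is not even generically finite.

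The paper sidesteps this by factoring the projection. First it projects from a single well-chosen point $x\in\Lambda$: since $X$ is not a cone with vertex $\Lambda$ (that would give $\dim X'=\dim X-r-1$), one can pick $x\in\Lambda$ so that the general line joining $x$ to a point of $X$ is not contained in $X$; the quadric hypothesis then forbids such a line from being trisecant, so the projection from $x$ is birational onto some $X_1\subset\P^{N-1}$, and both degree and codimension drop by exactly one because $x$ is a smooth point. Hence $X$ has minimal degree if and only if $X_1$ does. Second, $X'$ is the projection of $X_1$ from the $(r-1)$-plane $\Lambda_1$ (the image of $\Lambda$), and the equality $\dim X'=\dim X_1-r$ forces $X_1$ to be a cone over $X'$ with vertex $\Lambda_1$; a cone with linear vertex has the same degree and codimension as its base, so $X_1$ has minimal degree because $X'$ does. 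Your closing remark that a cone over a variety of minimal degree with a linear vertex is again of minimal degree is exactly the right second ingredient; what is missing is the first step --- peeling off one point of $\Lambda$ birationally so that the remaining projection exhibits a cone structure. I would rewrite the argument along these lines rather than pursue the blow-up computation.
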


\begin{proof} 
The inequality $\dim X'>\dim X-r-1$ implies $X$ is not a cone with
vertex $\Lambda$, so that there exists a point $x\in\Lambda$ such
that the line spanned by $x$ and a general point of $X$ is not
contained in $X$. Since $X$ is cut out by quadrics, such a line
cannot be  trisecant to $X$, and hence the projection from $x$
sends $X$ birationally to some $X_1\subset\P^{N-1}$. Therefore
both the degree and codimension of $X_1$ drop by one with respect
to those of $X$ (recall that $x$ is, by hypothesis, a smooth point
of $X$), and thus $X$ is a variety of minimal degree if and only
if $X_1$ is.

On the other hand, if $\Lambda_1$ is the $(r-1)$-dimensional image of
$\Lambda$, then $X'$ is the image of $X_1$ under the linear
projection from $\Lambda_1$. Since $\dim X'=\dim X_1-r$, this means
that $X_1$ is a cone with vertex $\Lambda_1$. Hence now $X'$ has the
same degree and codimension as $X_1$, so that $X_1$ is a variety of
minimal degree because $X'$ is. As observed before, this completes the
proof.
\end{proof}

\begin{theorem} \label{megateoremon}
Let $F$ be an $(s,t)$-Steiner bundle on $\P^n$ with $s\ge2$ and
such that $\tilde J(F)$ has  dimension $t-n-s+1$. Then $F$ is one
of the Schwarzenberger bundles of Examples \ref{schwarzenberger},
\ref{superficieminimal}, \ref{rationalnormalscroll} or
\ref{Veronese}.
\end{theorem}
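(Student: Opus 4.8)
The plan is to run the iteration suggested in the text: starting from a reduced $(s,t)$-Steiner bundle $F$ with $\dim\tilde J(F)=t-n-s+1$, repeatedly apply the transform of Proposition \ref{transform} to pass to an $(s-1,t-1)$-Steiner bundle $F'_0$, until $s=2$, at which point Theorem \ref{teoremon}(iv) identifies the final bundle and its $\tilde J$ with a rational normal scroll. The delicate point is that Proposition \ref{maximaltransform}(iii) only gives the clean conclusions (irreducibility of $\tilde J(F)$, equality $J(F)=J(F'_0)$, etc.) under the hypothesis that $\tilde J(F'_0)$ is irreducible; so the iteration must be set up so that irreducibility is available at each stage, and then the geometric information must be pushed backward up the chain.

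First I would treat the base case $s=2$: here Theorem \ref{teoremon}(iv) already says $F$ is the Schwarzenberger bundle of Example \ref{rationalnormalscroll}, with $\tilde J(F)$ a smooth rational normal scroll in $\P(T)$ — in particular irreducible. Now for $s\ge 3$, pick a maximal-dimensional component $\tilde J_0$ of $\tilde J(F)$ and a point $(\alpha,H)\in\tilde J_0$, and form $F'_0$. By Proposition \ref{maximaltransform}(i),(ii), $\tilde J(F'_0)$ has the maximal dimension $(t-1)-n-(s-1)+1$ allowed by Theorem \ref{teoremon}(i), so $F'_0$ satisfies the hypotheses of the theorem with $s$ decreased by one. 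By induction on $s$, $F'_0$ is one of the four Schwarzenberger examples; and since for all four examples $\tilde J$ is a variety of minimal degree (a rational normal scroll, or the Veronese in the rank-$3$, $s=3$, $n=2$ case), $\tilde J(F'_0)$ is in particular irreducible. Hence the hypothesis of Proposition \ref{maximaltransform}(iii) is met, giving: $\tilde J(F)$ is irreducible, $\tilde J(F'_0)=pr_{(\alpha,H)}(\tilde J(F))$, $J(F)=J(F'_0)$, and $\Sigma(F'_0)=pr_\alpha(\Sigma(F))$.

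Next I would use Example \ref{ejemploslineales}(i) — $\tilde J(F)\subset\P(T)$ is cut out by quadrics — together with Proposition \ref{maximaltransform}(iii)(a) and Lemma \ref{minimaldegree}. Since $\tilde J(F)$ is smooth at the points of $\tilde J_0$ by Theorem \ref{teoremon}(iii), irreducible, cut out by quadrics, contains the linear space $\pi_1^{-1}(\alpha)$, and its projection from that linear space is $\tilde J(F'_0)$, which (by induction) is a variety of minimal degree of the expected dimension, Lemma \ref{minimaldegree} yields that $\tilde J(F)$ itself is a smooth variety of minimal degree — hence a quadric, a rational normal scroll, or a Veronese surface (classification recalled before Lemma \ref{minimaldegree}). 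The quadric and Veronese cases are constrained by the dimension/codimension bookkeeping and by the structure of the projections $\pi_1,\pi_2$ to $\P(S)$ and $\P(T)$: a Veronese surface forces $n=2$, $s=3$, $t=6$, recovering Example \ref{Veronese}; a quadric of the right dimension is in fact a rational normal scroll unless it is a smooth $2$-dimensional quadric, which again is a scroll; so generically $\tilde J(F)$ is a rational normal scroll. Identifying the scroll structure with $\pi_1$ (using Example \ref{ejemploslineales}(ii),(iii) exactly as in the proof of Theorem \ref{teoremon}(iv)), one reads off $S=H^0(\O_{\tilde J(F)}(f))$, $T=H^0(\O_{\tilde J(F)}(h))$, and, via $J(F)=J(F'_0)$ and $\Sigma(F'_0)=pr_\alpha(\Sigma(F))$ feeding the inductive description, $H^0(\O_{\P^n}(1))^*=H^0(\O_{\tilde J(F)}(h-f))$; Lemma \ref{jumpingSegre}(iii) then shows $F$ is the Schwarzenberger bundle of $(\tilde J(F),\O(f),\O(h-f))$. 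Matching the numerical invariants against Proposition \ref{unicoschwarzenberger} and Examples \ref{schwarzenberger}–\ref{Veronese} finishes the identification.

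The main obstacle I anticipate is not the Lemma \ref{minimaldegree} step but the bookkeeping that lets the inductive hypothesis be applied and then transferred back: one must know $\tilde J(F'_0)$ is irreducible \emph{before} invoking Proposition \ref{maximaltransform}(iii), and this is precisely where the inductive classification of $F'_0$ (all four examples having irreducible $\tilde J$) is essential — there is a mild circularity to be careful about, resolved by ordering the induction on $s$ and using that the $s=2$ base case has irreducible $\tilde J$ unconditionally. A secondary technical point is ruling out the possibility that some intermediate $F'_0$, though reduced with maximal-dimensional $\tilde J$, has the ``wrong'' $n$ or $t$ relative to the ambient projective space; but this is controlled because the transform changes $(s,t)$ to $(s-1,t-1)$ and fixes $n$, so the numerical relation defining ``maximal dimension'' is preserved exactly, and the dimension count in Proposition \ref{maximaltransform}(i),(ii) is tight. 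Once those are in hand, the geometric recognition (scroll vs.\ Veronese vs.\ quadric) is routine classical projective geometry.
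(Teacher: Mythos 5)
Your proposal follows the paper's own proof almost step for step: iterate the transform of Propositions \ref{transform}--\ref{maximaltransform} until $s=2$, use Theorem \ref{teoremon}(iv) to identify the bottom bundle and get irreducibility there, propagate irreducibility back up with Proposition \ref{maximaltransform}(iii), apply Lemma \ref{minimaldegree} and the classification of smooth varieties of minimal degree, and conclude with Lemma \ref{jumpingSegre}(iii). Organizing this as an induction on $s$ rather than as a single descent followed by back-propagation is only a cosmetic difference. (Two minor imprecisions: the transform produces an $(s-1,t-1-\epsilon)$-bundle with $\epsilon=\dim\pi_1^{-1}(\alpha)$, not always $(s-1,t-1)$ --- though, as you note, maximality of $\dim\tilde J$ is preserved either way; and in the scroll case one only gets $\pi_1^*\O_{\P(S)}(1)=\O_{\tilde J(F)}(rf)$ a priori, with $r=1$ following from the numerical computation, not from the scroll structure alone.)

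The one genuine gap is in how you verify the hypotheses of Lemma \ref{jumpingSegre}(iii), specifically that $\alpha:S=H^0(\O_{\P(S)}(1))\to H^0(\tilde J(F),\pi_1^*\O_{\P(S)}(1))$ is an isomorphism, i.e.\ the nondegeneracy and linear normality of $\Sigma(F)$ in $\P(S)$. Your argument treats $J$ and $\Sigma$ symmetrically, but the transform does not: Proposition \ref{maximaltransform}(iii) gives $J(F)=J(F'_0)$ on the nose, whereas for $\Sigma$ it only gives $\Sigma(F'_0)=pr_\alpha(\Sigma(F))$, a proper projection. Knowing that the projection of $\Sigma(F)$ from one of its points is a linearly normal variety of minimal degree does not by itself give linear normality of $\Sigma(F)$, and the trisecant argument of Lemma \ref{minimaldegree} is not available in $\P(S)$ because $\Sigma(F)$ is not known to be cut out by quadrics there. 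The paper closes this by running the symmetric iteration of Remark \ref{simetria} --- restricting to a jumping hyperplane at each step so as to land on a reduced Steiner bundle $F'''$ on $\P^1$ --- and invoking Theorem \ref{teoremon}(v) to conclude $\Sigma(F)=\Sigma(F''')$ is a rational normal scroll, hence linearly normal and nondegenerate. You need this second iteration (or an equivalent argument) before you may legitimately write $s=h^0(\O_{\tilde J(F)}(rf))=r+1$ in the final identification.
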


\begin{proof} 
By Proposition \ref{maximaltransform}, we can construct an
$(s-1,t-1-\epsilon)$-Steiner bundle $F'_0$ such that $\tilde
J(F'_0)$ has dimension $t-n-s+1-\epsilon$. In particular, $F'_0$
has the maximal dimension allowed by Theorem
\ref{teoremon}(i). Iterating this process $s-2$ times, we arrive to a
reduced $(2,t'')$-Steiner bundle $F''$ such that $\tilde J(F'')$
has dimension $t''-n-s+1$. Thus Theorem \ref{teoremon}(iv)\ implies that
$\tilde J(F'')$ is a smooth rational normal scroll in $\P^{t''-1}$.
Since $\tilde J(F'')$ is irreducible, it follows from Proposition
\ref{maximaltransform}(iii) that also $\tilde J(F)$ is irreducible, that
$\tilde J(F'')$ is the image of $\tilde J(F)$ under a series of
$s-2$ inner projections from different linear subspaces, and that
$J(F)=J(F'')$. Since we know that $J(F'')$ is a rational normal
scroll, also $J(F)$ is. Similarly (see Remark \ref{simetria}), we can
produce from $F$ a reduced Steiner bundle $F'''$ on $\P^1$, so that it
follows from Theorem \ref{teoremon}(v) that $\Sigma(F)=\Sigma(F''')$ is a
rational normal scroll. On the other hand, Lemma
\ref{minimaldegree} implies that $\tilde J(F)$ is a variety of
minimal degree. Using the classification of smooth varieties of
minimal degree, we study separately each of the three
possibilities for $\tilde J(F)$ (we do not consider the
possibility of a quadric, since $\tilde J(F)$ has codimension
$n+s-2$, and this is one only in the case $n=1,s=2$, which is
trivial by Theorem \ref{teoremon}):

--If $\tilde J(F)$ is a rational normal curve (hence $t=n+s$) of
degree $t-1$, then necessarily $\tilde J(F'')$ is also a rational
normal curve obtained from $\tilde J(F)$ by projecting from $s-2$
points on it, so that $t''=t-s+2=n+2$. Therefore, Theorem
\ref{teoremon}(iv) says that $F''$ is the Schwarzenberger bundle of the
triplet $(\P^1,\O_{\P^1}(1),\O_{\P^1}(n))$, and in particular
$J(F'')$ is a rational normal curve of degree $n$. Since
$J(F)=J(F'')$, it follows that
$\pi_2^*\O_{{\P^n}^*}(1)=\O_{\P^1}(n)$. On the other hand, the
equality $\O_{\tilde J(F)}(1)=\O_{\P^1}(n+s-1)$ implies
$\pi_1^*\O_{\P(S)}(1)=\O_{\P^1}(s-1)$. The fact that $\tilde J(F)$,
$\Sigma(F)$ and $J(F)$ are rational normal curves implies that the
hypotheses of Lemma \ref{jumpingSegre}(iii) are satisfied, so that we are
in the case of Example \ref{schwarzenberger} (of course, this is
the case obtained in \cite{AO} and \cite{V1}, because we are dealing with
Steiner bundles of rank $n$).

--If $\tilde J(F)$ is a Veronese surface, then $t-n-s+1=2$ and $t=6$. 
An inner projection produces a rational normal scroll only when
projecting from one or two points, so that $s=3,4$. If $s=4$, then
$\tilde J(F'')$ is a smooth quadric in $\P^3$, so that $J(F'')$ is a
line. Since $J(F'')=J(F)$ and there are no regular maps from the
Veronese surface to $\P^1$, this case is not possible. Therefore
$s=3$ (hence $n=2$) and $\tilde J(F'')$ is a cubic surface
scroll in $\P^4$, so that $J(F'')$ is isomorphic to $\P^2$.
Since the map $\pi_2:\tilde J(F)\to J(F)$ has linear fibers, it
follows that it is an isomorphism and
$\pi_2^*\O_{{\P^n}^*}(1)\cong\O_{\P^2}(1)$. And since the hyperplane
class of $\tilde J(F)$ is $\O_{\P^2}(2)$, it also follows
$\pi_1^*\O_{{\P^n}^*}(1)\cong\O_{\P^2}(1)$ and $\pi_1$ is also
necessarily an isomorphism. By Lemma \ref{jumpingSegre}(iii), we
are in the case of Example \ref{Veronese}.

--Finally, assume that $\tilde J(F)\subset\P(T)$ is a rational normal
scroll of dimension $t-n-s+1>1$ (and degree $n+s-1$). 
Since the only non trivial splitting of the hyperplane section $h$ of
$\tilde J(F)$ into two globally generated line bundles is as
$$\O_{\tilde J(F)}(h)=\O_{\tilde J(F)}(rf)\otimes\O_{\tilde
J(F)}(h-rf)$$ 
for some integer $r>0$ (as usual, $f$ represents the fiber
of the scroll), one of the factors must be $\pi_1^*\O_{\P(S)}(1)$ and
the other one must be $\pi_2^*\O_{{\P^n}^*}(1)$. 

Assume for example $\pi_1^*\O_{\P(S)}(1)=\O_{\tilde J(F)}(rf)$ 
and $\pi_2^*\O_{{\P^n}^*}(1)=\O_{\tilde J(F)}(h-rf)$. In this case,
since $\tilde J(F)$, $\Sigma(F)$ and $J(F)$ are varieties of minimal
degree, Lemma \ref{jumpingSegre}(iii) implies that
$F$ is the Schwarzenberger bundle of the triplet $(\tilde
J(F),\O_{\tilde J(F)}(rf),\O_{\tilde J(F)}(h-rf))$. Hence 
$$s=h^0(\O_{\tilde J(F)}(rf))=r+1$$
$$n+1=h^0(\O_{\tilde J(F)}(h-rf))=t-r(t-n-s+1)$$
so that $t-n-s+1=t-(t-r(t-n-s+1)-1)-(r+1)+1$ and thus
$(r-1)(t-n-s)=0$, which implies $r=1$, so that we are in the case of
Example \ref{rationalnormalscroll}. 

The case $\pi_1^*\O_{\P(S)}(1)=\O_{\tilde J(F)}(h-rf)$ 
and $\pi_2^*\O_{{\P^n}^*}(1)=\O_{\tilde J(F)}(rf)$ is analogous, and
we would obtain here Example \ref{superficieminimal}. 
\end{proof}

If we just want to study the dimension of the set of jumping
hyperplanes, we have the following:

\begin{corollary} \label{corhiperplanos} 
Let $F$ be an $(s,t)$-Steiner bundle with $s\ge2$. Then $J(F)$ has
dimension at most $t-n-s+1$, with equality if and only if $F$ is
the Schwarzenberger bundle of one of the following triplets
$(X,L,M)$:
\begin{enumerate}
\item[(i)] $X=\P^1$, $L=\O_{\P^1}(s-1)$, $M=\O_{\P^1}(n)$.
\item[(ii)] $X\subset\P^{t-1}$ a smooth rational normal scroll of
dimension $t-n-1$ and degree $n+1$ different from $\P^1\times\P^n$
(i.e. $t\ne 2n+1$) and $L=\O_X(f)$, $M=\O_X(h-f)$ (see Example
\ref{rationalnormalscroll}).
\item[(iii)] $X=\P^2$, $L=M=\O_{\P^2}(1)$.
\end{enumerate}
\end{corollary}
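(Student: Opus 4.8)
The plan is to deduce the corollary from Theorem \ref{megateoremon} by comparing $\dim J(F)$ with $\dim\tilde J(F)$. For the bound, since $J(F)=\pi_2(\tilde J(F))$ by Lemma \ref{jumpingSegre}(ii) we have $\dim J(F)\le\dim\tilde J(F)$, and Theorem \ref{teoremon}(i) gives $\dim\tilde J(F)\le t-n-s+1$; hence $\dim J(F)\le t-n-s+1$. For the equality statement, suppose $\dim J(F)=t-n-s+1$. As $\pi_2$ maps $\tilde J(F)$ onto $J(F)$, this forces $\dim\tilde J(F)\ge t-n-s+1$, so in fact $\dim\tilde J(F)=t-n-s+1$ by Theorem \ref{teoremon}(i). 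Theorem \ref{megateoremon} then says that $F$ is the Schwarzenberger bundle of one of the triplets of Examples \ref{schwarzenberger}, \ref{superficieminimal}, \ref{rationalnormalscroll} or \ref{Veronese}, and the remaining task is to decide, within each of these four families, whether $J(F)$ itself (not merely $\tilde J(F)$) attains the maximal dimension $t-n-s+1$; the same computations will also yield the converse implication.

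The key point is that in each of these four cases $\tilde J(F)$ is the parameter variety $X$ of the triplet and, under the identifications already made, $\pi_2\colon\tilde J(F)\to J(F)\subset{\P^n}^*$ is the morphism of $X$ defined by the second bundle $M$ (this is the content of Remark \ref{geometriaSchwarzenberger}, together with Remark \ref{solojumping}); so everything reduces to whether this morphism drops dimension. For Example \ref{schwarzenberger} one has $t=n+s$, hence $t-n-s+1=1$, and $\pi_2$ is the degree-$n$ Veronese embedding of $\P^1$, which is finite because $n\ge1$; thus $\dim J(F)=1$ and equality holds --- this is case (i). For Example \ref{superficieminimal} one has $n=1$, so $J(F)\subset{\P^1}^*$ and $\dim J(F)\le1$, whereas $t-n-s+1=t-s=\operatorname{rk}F$; equality therefore forces $\operatorname{rk}F=1$, i.e. $F=\O_{\P^1}(s)$, which is the Schwarzenberger bundle of $(\P^1,\O_{\P^1}(s-1),\O_{\P^1}(1))$ and so is already subsumed under case (i). For Example \ref{rationalnormalscroll} one has $s=2$ and $\pi_2$ is the morphism of the scroll $X$ defined by $\O_X(h-f)$; by Remark \ref{solojumping} this drops dimension precisely when $X=\P^1\times\P^n$, so $\dim J(F)=t-n-1=t-n-s+1$ exactly when $X\ne\P^1\times\P^n$ --- this is case (ii). Finally, for Example \ref{Veronese} the computation in the proof of Theorem \ref{megateoremon} shows that $\pi_2$ is an isomorphism of the Veronese surface onto ${\P^2}^*$, so $\dim J(F)=2=t-n-s+1$ --- this is case (iii).

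Putting these together proves both implications: if $\dim J(F)=t-n-s+1$ then $F$ is one of the four model bundles, and among them the maximal dimension is realised exactly by the triplets in (i), (ii) with $X\ne\P^1\times\P^n$, and (iii); conversely, for each of those three triplets the computation above exhibits $\dim J(F)=t-n-s+1$ (the reverse inequality being automatic from the first part). I expect the only slightly delicate piece to be the control of $\pi_2$, that is, the separation of $J(F)$ from $\tilde J(F)$: concretely, that Example \ref{superficieminimal} bundles have their jumping locus trapped inside a $1$-dimensional ${\P^1}^*$ and hence too small once $\operatorname{rk}F\ge2$, and that the map by $\O_X(h-f)$ in Example \ref{rationalnormalscroll} collapses a dimension exactly for the Segre scroll $\P^1\times\P^n$. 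Both facts are essentially already contained in Remark \ref{solojumping}, so beyond invoking Theorem \ref{megateoremon} and bookkeeping the dimension counts I anticipate no real obstacle.
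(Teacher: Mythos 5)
Your proposal is correct and follows the same route as the paper: the bound comes from $\dim J(F)\le\dim\tilde J(F)$ together with Theorem \ref{teoremon}(i), and the equality case comes from Theorem \ref{megateoremon} by discarding the examples where $\pi_2$ drops dimension. You simply spell out the case-by-case check (finiteness of $\pi_2$ in each of the four families) that the paper leaves largely implicit, noting in particular, exactly as the paper does, that the only surviving instance of Example \ref{superficieminimal} is the rank-one case already subsumed under case (i).
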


\begin{proof} 
The inequality follows from Theorem \ref{teoremon}(i) using that
$\dim J(F)\le\dim\tilde J(F)$. In case of equality, we have to
remove from Theorem \ref{megateoremon} the cases in which $\dim
J(F)<\dim\tilde J(F)$. Observe that the case $t=s+1$ in Example
\ref{superficieminimal} (i.e. when $\dim J(F)=\dim\tilde J(F)=1$)
becomes the case $n=1$ in Example
\ref{schwarzenberger}, so that we do not need to consider it.
\end{proof}

We also have this improvement of Re's results in the case of line
bundles:

\begin{corollary} \label{re}
Let $L,M$ be two globally generated line bundles on an irreducible
variety $X$, and assume that $L\otimes M$ is ample. Then
$h^0(L\otimes M)\ge h^0(L)+h^0(M)+\dim(X)-2$, with equality if and
only if there is a triplet $(X',L',M')$ as in Examples
\ref{schwarzenberger}, \ref{superficieminimal},
\ref{rationalnormalscroll} or \ref{Veronese} such that there exists
a finite map $f:X\to X'$ satisfying $L=f^*L'$ and $M=f^*M'$.
\end{corollary}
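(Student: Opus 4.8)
The plan is to pass to the Schwarzenberger bundle of $(X,L,M)$ and read the statement off Theorems \ref{teoremon} and \ref{megateoremon}. Assume $h^0(L)\ge2$ and $h^0(M)\ge2$ (if one of these equals $1$ the situation degenerates, e.g. $L=\O_X$, and must be treated apart). Write $s=h^0(L)$, $n+1=h^0(M)$, $t=h^0(L\otimes M)$. Since $X$ is projective, $L$ is a line bundle, and multiplication by a nonzero section of $M$ is injective on $H^0(L)$, Example \ref{mainconstruction} shows that the Schwarzenberger bundle $F$ of $(X,L,M)$ is a genuine $(s,t)$-Steiner bundle on $\P^n=\P(H^0(M)^*)$. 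As $s\ge2$, Theorem \ref{teoremon}(i) gives $\dim\tilde J(F)\le t-n-s+1=h^0(L\otimes M)-h^0(L)-h^0(M)+2$; so the asserted inequality follows once I exhibit inside $\tilde J(F)$ a subvariety of dimension $\dim X$.

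For this I would use Lemma \ref{jumpingSchwarzenberger}(ii) with $a=b=1$: each $x\in X$ gives a jumping pair of $F$, and unwinding its proof this pair is $(\varphi_L(x),\varphi_M(x))$, where $\varphi_L\colon X\to\P(S)$ and $\varphi_M\colon X\to{\P^n}^*$ are the morphisms defined by the complete systems $|L|$, $|M|$ (so $\varphi_L^*\O_{\P(S)}(1)=L$ and $\varphi_M^*\O_{{\P^n}^*}(1)=M$). Thus $g:=(\varphi_L,\varphi_M)$ is a morphism $X\to\tilde J(F)\subset\P(S)\times{\P^n}^*$. Composing with the Segre embedding and recalling from Example \ref{ejemploslineales} that $\O(1)$ pulls back to $\pi_1^*\O_{\P(S)}(1)\otimes\pi_2^*\O_{{\P^n}^*}(1)$, the morphism $g$ followed by Segre is the one defined by the base-point-free subsystem of $|L\otimes M|$ spanned by the products $\sigma\tau$ ($\sigma\in H^0(L)$, $\tau\in H^0(M)$), base-point-freeness being immediate from $L,M$ globally generated. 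Since $L\otimes M$ is ample, a morphism given by a base-point-free subsystem of $|L\otimes M|$ contracts no curve and hence is finite; as the Segre map is a closed immersion, $g$ is finite, so $g(X)$ has dimension $\dim X$ and the inequality is proved.

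Suppose now equality holds, i.e. $\dim X=t-n-s+1$; then $\dim\tilde J(F)=t-n-s+1$ is maximal, so Theorem \ref{megateoremon} applies: $F$ is the Schwarzenberger bundle of one of the triplets of Examples \ref{schwarzenberger}, \ref{superficieminimal}, \ref{rationalnormalscroll} or \ref{Veronese}, and by the proof of that theorem (through Lemma \ref{jumpingSegre}(iii)) one may take $X'=\tilde J(F)$, $L'=\pi_1^*\O_{\P(S)}(1)$, $M'=\pi_2^*\O_{{\P^n}^*}(1)$. Since $g$ is finite, $\dim X=\dim\tilde J(F)$, and $\tilde J(F)$ is irreducible in this case, $g$ is surjective onto $X'$; and $g^*L'=\varphi_L^*\O_{\P(S)}(1)=L$, $g^*M'=\varphi_M^*\O_{{\P^n}^*}(1)=M$, so $f:=g$ is the required finite map. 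For the reverse implication one first checks, case by case, that each of the four triplets $(X',L',M')$ satisfies $h^0(L'\otimes M')=h^0(L')+h^0(M')+\dim X'-2$; then, given $(X',L',M')$ and a finite $f\colon X\to X'$ with $L=f^*L'$, $M=f^*M'$ of the refined type furnished by \cite{R}, Theorem~2 (namely with $f^*$ carrying $H^0(L')$, $H^0(M')$ onto $H^0(L)$, $H^0(M)$), equality for $X$ follows from $\dim X=\dim X'$ together with the surjectivity of the multiplication map for each of the four examples.

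The delicate point is this last transfer in the converse. With only ``$L=f^*L'$, $M=f^*M'$'' for an arbitrary finite $f$ the equality is genuinely not inherited — for instance $X=X'=\P^2$, $L=M=\O_{\P^2}(2)$, $L'=M'=\O_{\P^2}(1)$ and $f$ a finite map defined by conics violate it — so the ``only if'' direction really needs, and I would spell out, the finer structure from Re's Theorem~2 (that in the equality case the full linear systems, not merely the line bundles, are pulled back), which yields $h^0(L)=h^0(L')$, $h^0(M)=h^0(M')$ and $h^0(L\otimes M)=h^0(L'\otimes M')$. Everything else in the argument is a formal consequence of Theorems \ref{teoremon}(i) and \ref{megateoremon}, the only real input being the finiteness of $g$, which rests solely on the ampleness of $L\otimes M$.
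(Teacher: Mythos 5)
Your main line of argument is the same as the paper's: form the Schwarzenberger bundle $F$ of $(X,L,M)$, observe that $\tilde J(F)$ contains the image of $X$ under the map defined by the products $\sigma\tau$, deduce the inequality from Theorem \ref{teoremon}(i), and in the equality case invoke Theorem \ref{megateoremon} together with the identification $X'=\tilde J(F)$, $L'=\pi_1^*\O_{\P(S)}(1)$, $M'=\pi_2^*\O_{{\P^n}^*}(1)$ and $f=(\varphi_L,\varphi_M)$. You are in fact more careful than the paper at two points where care is genuinely needed: the paper asserts that $\tilde J(F)$ \emph{is} the image of $X$ (false in general, cf.\ Remark \ref{solojumping}; only the inclusion, plus the finiteness of the map that you derive from ampleness of $L\otimes M$, is needed for the inequality, and equality of dimensions recovers surjectivity in the equality case), and the paper tacitly assumes $h^0(L)\ge2$ and $h^0(M)\ge2$ so that Theorem \ref{teoremon} applies --- without this the inequality can even fail, e.g.\ $X=\P^2$, $L=\O_{\P^2}$, $M=\O_{\P^2}(1)$ gives $3<4$.

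There remains, however, a gap in your treatment of the converse (``if'') direction, and it is worth being precise here because the paper's own proof does not address that direction at all. You rightly observe that with only $L=f^*L'$, $M=f^*M'$ the equality is not inherited, and you propose to require in addition that $f^*$ carry $H^0(L')$ and $H^0(M')$ onto $H^0(L)$ and $H^0(M)$. This is still not enough. Take $X$ an elliptic curve, $f:X\to\P^1$ a degree-two cover, and $L=M=f^*\O_{\P^1}(1)$: then $h^0(L)=h^0(M)=2=h^0(\O_{\P^1}(1))$, so $f^*$ is onto on sections of $L$ and of $M$, the triplet $(\P^1,\O_{\P^1}(1),\O_{\P^1}(1))$ is as in Example \ref{schwarzenberger}, and the multiplication map on $\P^1$ is surjective; yet $h^0(L\otimes M)=4>3=h^0(L)+h^0(M)+\dim X-2$. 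The point is that surjectivity of $H^0(L')\otimes H^0(M')\to H^0(L'\otimes M')$ together with your conditions only controls the \emph{image} of the multiplication map for $(X,L,M)$, namely $f^*H^0(L'\otimes M')$, and not $h^0(L\otimes M)$ itself, which can be strictly larger. The converse requires the further hypothesis $f^*H^0(L'\otimes M')=H^0(L\otimes M)$ (equivalently $h^0(L\otimes M)=h^0(L'\otimes M')$), with which it becomes immediate; the ``if'' half of the corollary should be read, or amended, accordingly --- your scrutiny of this point is well placed, but the repair you propose does not yet close it.
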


\begin{proof} 
Let $F$ be the Schwarzenberger bundle of the triplet $(X,L,M)$.
Then $\tilde J(F)$ is the image of $X$ via $L\otimes M$. Since
$L\otimes M$ is ample and globally generated, it follows
$\dim(\tilde J(F))=\dim(X)$. Thus the wanted inequality is just
Theorems \ref{teoremon}(i). In case we have equality, we know by
Theorem \ref{megateoremon} that $F$ is the Schwarzenberger bundle
of a triplet $(X',L',M')$ as in Examples \ref{schwarzenberger},
\ref{superficieminimal}, \ref{rationalnormalscroll} or \ref{Veronese}.
Moreover, the proof gives that $X'$ is $\tilde J(F)$, i.e. the
image of $X$ via the map $f$ defined by $L\otimes M$. Also, since
the composition
$X\mapright{f}X'\mapright{\pi_1}\P(H^0(L))$ is the map defined by $L$,
it follows $L=f^*\pi_1^*\O_{\P(H^0(L))}(1)=f^*L'$, and similarly we
obtain $M=f^*M'$.
\end{proof}

\begin{remark} \label{novaleV} {\rm
It could seem a priori that it is possible to obtain Theorem
\ref{megateoremon} as a Corollary of the corresponding result of
\cite{V1} for Steiner bundles of rank $n$. In fact, we can always take a
general quotient $T\to T_1$ of dimension
$n+s$ and, if $K$ is its kernel, we get a commutative diagram
$$\begin{matrix}
&&&&0&&0\cr
&&&&\downarrow&&\downarrow\cr
&&&&K\otimes\O_{\P^n}(-1)&=&K\otimes\O_{\P^n}(-1)\cr
&&&&\downarrow&&\downarrow\cr
0&\to&S\otimes\O_{\P^n}(-1)&\to&T\otimes\O_{\P^n}&\to&F&\to&0\cr
&&||&&\downarrow&&\downarrow\cr
0&\to&S\otimes\O_{\P^n}(-1)&\to&T_1\otimes\O_{\P^n}&\to&F_1&\to&0\cr
&&&&\downarrow&&\downarrow\cr
&&&&0&&0
\end{matrix}$$
in which now $F_1$ is a Steiner bundle of rank $n$. From this
diagram, it is not difficult to see that $\tilde J(F_1)$ is the
intersection of $\tilde J(F)$ with $\P(T_1)$. Since $\P(T_1)$ has
codimension $t-n-s$ in $\P(T)$, it follows that $\dim\tilde
J(F_1)\ge\dim\tilde J(F)-t+n+s$. Since the dimension of $\tilde
J(F_1)$ is at most one (by Theorem \ref{teoremon}(i), which is in this case
the result of \cite{V1}), it follows that $\tilde J(F)$ has dimension at
most $t-n-s+1$. Moreover, if equality holds, we can apply the known
result for $F_1$ and get that $\tilde J(F_1)$ is a rational normal
curve, so that $\tilde J(F)$ has only one component of maximal
dimension, which is a variety of minimal degree in $\P(T)$.
However, such a proof does not exclude the possibility that
$\tilde J(F)$ (or
$J(F)$) has other components of smaller dimension, while our
proof shows the irreducibility of $\tilde J(F)$. Hence our proof
actually provides a positive answer to  Question \ref{otrapregunta}
for the Examples \ref{schwarzenberger}, \ref{superficieminimal},
\ref{rationalnormalscroll} and \ref{Veronese}.
}\end{remark}

\begin{remark} \label{nadamas} {\rm
The proof of Theorem \ref{megateoremon} gives an idea of the
difficulty of proving a similar result for arbitrary $a,b$.
Independently of the fact that we were not able to find a
reasonable bound for the dimension of $J_{a,b}(F)$, the main
obstacle to prove something analogous to Theorem
\ref{megateoremon} is that we do not have a first induction step
to apply an iteration using Proposition \ref{transform}. Indeed,
the minimal value of $s$ would be $s=a+1$ (see Lemma
\ref{tecnico}), but as observed in Remark \ref{pocosjumping}, a
result like Theorem \ref{teoremon}(iv) cannot hold because, for
general values of $a,b$, one expects
$\tilde J_{a,b}(F)$ to be empty, even for $s=a+1$. The same
problem remains when trying to apply the iteration process
explained in Remark \ref{simetria}, since the first step should be a
Steiner bundle on $\P^{b+1}$, for which we also expect $\tilde
J_{a,b}(F)$ to be empty for general values of $a,b$.

On the other hand, it would also be nice to generalize Theorem
\ref{megateoremon} to arbitrary $a,b$ in order to generalize the
improvement of Re's results given in Corollary \ref{re} to
arbitrary rank. Since our proof for $a=b=1$ is closely related to
the classification of varieties of minimal degree in the projective
space, a generalization to arbitrary $a,b$ is likely to depend on
a good theory of varieties of minimal degree in Grassmannians (see
\cite{Si} for a first natural approach). 
}\end{remark}

\begin{remark} \label{Soares} {\rm
In \cite{So}, Soares gave a natural definition of Steiner bundle on any
projective variety. It would be nice to have also the notion of
Schwarzenberger bundle in her general context. For example, to get
a natural definition on Grassmannians, one could take a triplet
$(X,L,M)$ and fix an integer $r$ such that, for each
$r$-dimensional subspace $V\subset H^0(M)$ the natural map
$H^0(L)\otimes V\to H^0(L\otimes M)$ is injective. Let us consider
$G=G(r,H^0(M))$, the Grassmann variety of linear subspaces of
dimension
$r$ in $H^0(M)$, and let $\U$ be the rank $r$ universal
subbundle of $G$. Then there is an exact sequence of
vector bundles on $G$:
$$0\to H^0(L)\otimes\U\to H^0(L\otimes M)\otimes\O_G\to
F\to0$$
defining $F$ as a cokernel. This is a Steiner bundle on
$G$ in the sense of \cite{So}, so that it seems natural to define
Schwarzenberger bundles on $G$ as the bundles obtained in this way.
Of course, when $r=1$ we recover our definition of Schwarzenberger
bundle on the projective space. 
}\end{remark}


\bigskip

\centerline{Departamento de \'Algebra}
\centerline{Facultad de Ciencias Matem\'aticas} 
\centerline{Universidad Complutense de Madrid}
\centerline{28040 Madrid, Spain}
\centerline{arrondo@mat.ucm.es}

\end{document}